
\documentclass[11pt,reqno]{amsart}
\usepackage{bbm}
\usepackage{amssymb, amsthm, amsmath, amscd}
\usepackage[usenames,dvipsnames]{color}

\oddsidemargin 0truein \evensidemargin 0pt \topmargin 0pt
\textheight 8.5truein
\textwidth 6.3truein

\theoremstyle{definition}
\newtheorem{notation}{Notation}[section]
\newtheorem{theorem}[notation]{Theorem}

\newtheorem{corollary}[notation]{Corollary}
\newtheorem{lemma}[notation]{Lemma}
\newtheorem{remark}[notation]{Remark}
\newtheorem{definition}[notation]{Definition}
\newtheorem{question}[notation]{Question}
\newtheorem*{theorem*}{Theorem}
\newtheorem*{corollary*}{Corollary}

\usepackage{amsfonts}
\usepackage{mathrsfs}
\usepackage{enumerate}
\usepackage{textcomp}
\usepackage[all]{xy}
\usepackage{enumitem}
\usepackage{mathtools}
\usepackage{hyperref}
\hypersetup{
	colorlinks=true,
	citecolor=Blue,
	linkcolor=Blue,
	pdfborder=0 0 0}

\numberwithin{equation}{section}

\begin{document}

\title{Stable rank for crossed products by finite group actions with the weak tracial Rokhlin property}

\author{Xiaochun Fang}
\address{School of Mathematical Sciences, Tongji University, Shanghai 200092, China}
\email{xfang@tongji.edu.cn}

\author{Zhongli Wang}
\address{School of Mathematical Sciences, Tongji University, Shanghai 200092, China}
\email{1810411@tongji.edu.cn}

\date{Month, Day, Year}
\keywords{C*-algebra, crossed product, weak tracial Rokhlin property, stable rank one}
\date{\today}

\maketitle
\begin{abstract}
Let $A$ be an infinite-dimensional stably finite simple unital C*-algebra, let $G$ be a finite group, and let $\alpha\colon G\rightarrow \mathrm{Aut}(A)$ be an action of $G$ on $A$ which has the weak tracial Rokhlin property. We prove that if $A$ has property (TM), then the crossed product $A\rtimes_\alpha G$ has property (TM). As a corollary, if $A$ is an infinite-dimensional separable simple unital C*-algebra which has stable rank one and strict comparison, $\alpha\colon G\rightarrow \mathrm{Aut}(A)$ is an action of a finite group $G$ on $A$ with the weak tracial Rokhlin property, then $A\rtimes_\alpha G$ has stable rank one.
\end{abstract}








\section{Introduction}\label{sec1}
The notion of stable rank for C*-algebras was introduced by Rieffel in \cite{MR0693043}. It can be viewed as a noncommutative analogue of topological dimension for C*-algebras. The class of C*-algebras with minimal stable rank, i.e., stable rank one is of particular interest. It is shown in \cite{MR2106263} (in the unital case) and \cite{MR4524191} (in the non-unital case) that every finite simple $\mathcal{Z}$-stable C*-algebra has stable rank one.

The study of group actions on C*-algebras and the structure of crossed products is an important line of research. It has been a long-standing open problem in this area whether stable rank one is preserved under taking crossed products by finite group actions. Specifically, Phillips asked in \cite[Problem 10.2.6]{MR3837150} that if $A$ is a simple unital C*-algebra with stable rank one and $\alpha$ is an action of a finite group $G$ on $A$, does it follow that $A\rtimes_\alpha G$ has stable rank one? Without assuming simplicity for $A$, this is known to be false; see \cite[Example 8.2.1]{MR1053492}.

In order to obtain good results on the structure of crossed products, one generally requires that the group actions satisfy certain regularity conditions. The most useful conditions are the Rokhlin property, the tracial Rokhlin property, the weak tracial Rokhlin property and the finite Rokhlin dimension. It was proved by Osaka and Phillips in \cite{MR2875821} that various structural properties pass from a unital C*-algebra to its crossed product by a finite group action with the Rokhlin property, including stable rank one. However, the Rokhlin property is very restrictive since it implies certain divisibility conditions on K-theory. Inspired by the notion of tracial rank zero, Phillips introduced the tracial Rokhlin property for finite group actions in \cite{MR2808327}. The tracial Rokhlin property is much more common and still useful in the study of preservation of certain properties. For instance, it was proved by Fan and Fang in \cite[Theorem 3.1]{MR2487595} that the crossed product of an infinite-dimensional separable simple unital C*-algebra with stable rank one by a finite group action with the tracial Rokhlin property again has stable rank one. The same result was obtained by Osaka and Teruya in \cite[Theorem 3.4]{MR3759005}.

The weak tracial Rokhlin property (see Definition \ref{defwtr} below) is a generalization of the tracial Rokhlin property in which projections are replaced by positive elements. Since the tracial Rokhlin property requires the existence of projections, many C*-algebras do not admit any action with the tracial Rokhlin property, but they may admit an action with the weak tracial Rokhlin property; see \cite[Example 5.10]{MR3063095}. There are some classes of simple unital C*-algebras which are preserved under taking crossed products by finite group actions with the weak tracial Rokhlin property. For instance, the class of simple unital C*-algebras with tracial rank zero (see \cite[Theorem 2.6]{MR2808327} and \cite[Theorem 1.9]{MR3347172}), the class of simple unital tracially $\mathcal{Z}$-absorbing C*-algebras (see \cite[Theorem 5.6]{MR3063095}), and the class of simple unital C*-algebras with strict comparison (see \cite[Theorem 4.5]{MR4336489}). However, for the class of simple unital C*-algebras with stable rank one, the following question posed in \cite{MR4336489} is still open. 

\begin{question}[{\cite[Question 7.4]{MR4336489}}]\label{qs}
Let $A$ be an infinite-dimensional simple unital C*-algebra with stable rank one, let $G$ be a finite group, and $\alpha\colon G\rightarrow \mathrm{Aut}(A)$ be an action with the weak tracial Rokhlin property. Does it follow that $A\rtimes_\alpha G$ and $A^\alpha$ have stable rank one?
\end{question}

In this paper, we consider the notion of property (TM), a property of tracial matricial structure, introduced in \cite{fu2024tracialoscillationzerostable}. It is shown in \cite{fu2024tracialoscillationzerostable} that if $A$ is a separable simple C*-algebra which admits at least one densely defined non-trivial 2-quasitrace and has strict comparison, then $A$ has stable rank one if and only if $A$ has property (TM). Thus we deal with the preservation of property (TM) for finite group actions with the weak tracial Roklin property.

\begin{theorem*}[\ref{thmtm}]
	Let $A$ be an infinite-dimensional stably finite simple unital C*-algebra, let $G$ be a finite group, and let $\alpha\colon G\rightarrow \mathrm{Aut}(A)$ be an action of $G$ on $A$ which has the weak tracial Rokhlin property. If $A$ has property (TM), then $A\rtimes_\alpha G$ has property (TM).
\end{theorem*}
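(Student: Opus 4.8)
The plan is to verify the defining approximation condition of property (TM) directly for $A\rtimes_\alpha G$, transporting the corresponding data for $A$ across a matricial corner built from the Rokhlin tower; this is a tracial analogue of the Osaka--Phillips argument for the Rokhlin property, with exact orthogonality replaced by the smallness-in-the-Cuntz-sense that property (TM) permits. First I would record the standard preliminaries. Since $\alpha$ has the weak tracial Rokhlin property and $A$ is simple, the action is pointwise outer, so $A\rtimes_\alpha G$ is again an infinite-dimensional stably finite simple unital C*-algebra, and it suffices to test (TM) on the dense $*$-subalgebra spanned by $\{a u_g : a\in A,\ g\in G\}$. Thus, given a finite set $F_0\subseteq A$, a tolerance $\epsilon>0$, and a nonzero positive $z\in A\rtimes_\alpha G$, the goal is to produce the matricial data witnessing (TM) for $F=\{a u_g : a\in F_0,\ g\in G\}$.

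Next I would apply the weak tracial Rokhlin property of $\alpha$ to the finite set $F_0\cup\bigcup_{g\in G}\alpha_g(F_0)$, a small tolerance $\delta\ll\epsilon$, and a nonzero positive $x\in A$ chosen so that its image in $A\rtimes_\alpha G$ is Cuntz-dominated by $z$. This yields orthogonal positive contractions $(e_g)_{g\in G}$ that are approximately permuted by the action ($\|\alpha_g(e_h)-e_{gh}\|<\delta$), approximately commute with $F_0$, and whose defect satisfies $1-\sum_g e_g\precsim x$. Writing $e=\sum_g e_g$ and taking $1\in G$ for the unit, the crucial structural computation is that the elements $u_g e_1$ form approximate matrix units: using the covariance relation $u_g e_1=\alpha_g(e_1)u_g\approx e_g u_g$ together with orthogonality of the $e_g$, one checks that each compression $e_g(a u_h)e_k$ collapses onto a single matrix entry lying in the hereditary subalgebra $e_1 A e_1$. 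Consequently $e(A\rtimes_\alpha G)e$ is approximately identified (within $\delta$ on the relevant elements) with $M_{|G|}\bigl(e_1 A e_1\bigr)$, the compression of each generator $a u_g$ corresponding to an explicit matrix over $e_1 A e_1$.

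With this identification in hand, I would use (and, if not already available in the cited work, establish as preliminary lemmas) the permanence of property (TM) under passage to hereditary subalgebras and to matrix amplifications, whence $M_{|G|}(e_1 A e_1)\cong e(A\rtimes_\alpha G)e$ has property (TM). Applying the (TM) condition inside this corner to the compressed elements $\{e(a u_g)e\}$, with tolerance comparable to $\epsilon$ and with a nonzero positive error element inside $e(A\rtimes_\alpha G)e$ that is Cuntz-dominated by $z$, produces the matricial approximation for the tower part. Finally I would reassemble: because $e$ is approximately central for $F$, each $b=a u_g$ splits as $b\approx ebe+(1-e)b(1-e)$ with the cross terms controlled by $\delta$; the piece $ebe$ carries the matricial structure just obtained, while the remainder lies in the corner $(1-e)(A\rtimes_\alpha G)(1-e)$ and is negligible in the Cuntz sense since $1-e\precsim x\precsim z$. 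Combining the two pieces and absorbing all $\delta$-errors into $\epsilon$ gives the data required by property (TM) for $A\rtimes_\alpha G$.

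I expect the main obstacle to be the bookkeeping of the Cuntz comparisons across the three algebras $A$, $e(A\rtimes_\alpha G)e$, and $A\rtimes_\alpha G$: one must arrange the single element $z$ to simultaneously dominate the Rokhlin defect as an element of $A$ and serve as the allowed error in the corner's (TM) condition, which requires relating subequivalence in a corner to subequivalence in the ambient crossed product and controlling how the approximate (rather than exact) matrix units distort these comparisons. A secondary technical point is upgrading the approximate matrix-unit and approximate-centrality relations to honest ones via functional calculus and the stability of the relevant relations, so that the subalgebra extracted at the end is genuinely of the matricial form demanded by property (TM).
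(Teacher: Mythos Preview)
Your proposal rests on a mistaken reading of property (TM). As defined here (Definition~\ref{deftm}, following Fu--Lin), the input data are not a finite subset $F\subset A\rtimes_\alpha G$, a tolerance $\varepsilon$, and a nonzero positive $z$ serving as a Cuntz upper bound. One is instead handed an arbitrary positive contraction $b\in\mathrm{Ped}((A\rtimes_\alpha G)\otimes K)$, an integer $k$, and $\varepsilon>0$, and must produce a c.p.c.\ order zero map $\varphi\colon M_k\to\overline{b((A\rtimes_\alpha G)\otimes K)b}$ with $\|b-\varphi(1_k)b\|_{2,\mathrm{QT}(A\rtimes_\alpha G)}<\varepsilon$. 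There is no finite set to approximate, the target hereditary subalgebra is \emph{prescribed}, and smallness is measured in the tracial $\|\cdot\|_{2}$-seminorm, not via Cuntz subequivalence to a free parameter $z$. Thus the claims ``it suffices to test (TM) on the dense $*$-subalgebra spanned by $\{au_g\}$'' and ``a nonzero positive error element \dots\ Cuntz-dominated by $z$'' do not match the definition, and the final reassembly---splitting $au_g$ as $ebe+(1-e)b(1-e)$ with $1-e\precsim z$---is not what is being asked. What you have outlined is closer to a proof that a tracial-approximation property of the form ``for every $F,\varepsilon,z$ there is a matricial subalgebra large up to $z$'' passes to the crossed product; that is a different statement.

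The paper's proof has a related architecture but is aimed at the correct quantities. Starting from the fixed $b$, one first compresses to $M_m(B)$ via $b_2=p_mbp_m$ (with $B=A\rtimes_\alpha G$), incurring a $\|\cdot\|_2$-small error. The weak tracial Rokhlin property, packaged as Lemma~\ref{lemapm}, then yields a subalgebra $C\cong M_{nm}(\overline{aAa})$ of $M_m(B)$ and a nearly central $d\in C$ with $1_{M_m(B)}-d$ Cuntz-small; here Cuntz-smallness enters only instrumentally, to bound $\|1-d\|_{2,\tau}$ through $d_\tau$. A positive element of $C$ close to $b_2^{1/2}db_2^{1/2}$ is transported into $M_{nm}(\overline{aAa})$, property (TM) of $A$ supplies the order zero map $M_k\to\overline{eM_{nm}(A)e}$, and one composes back through a chain of (near-)isomorphisms (Lemma~\ref{lemcup}, Lemma~\ref{lemiso}) to land in $\overline{b(B\otimes K)b}$; the $\|\cdot\|_{2}$-estimates are then assembled via Lemma~\ref{lemqtn}. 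Your intuition that the Rokhlin tower produces approximate matrix units and that the defect is tracially negligible is exactly right, but it must be applied to the single element $b$ living in the stabilisation and to the tracial seminorm, not to a finite test set and a Cuntz bound.
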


Combining this theorem and the results of \cite{fu2024tracialoscillationzerostable}, we give an affirmative answer to Question \ref{qs} under the assumption that $A$ is separable and has strict comparison.

\begin{corollary*}[\ref{corsr}]
	Let $A$ be an infinite-dimensional separable simple unital C*-algebra, let $G$ be a finite group, and let $\alpha\colon G\rightarrow \mathrm{Aut}(A)$ be an action of $G$ on $A$ which has the weak tracial Rokhlin property. If $A$ has stable rank one and strict comparison, then $A\rtimes_\alpha G$ and $A^\alpha$ have stable rank one.
\end{corollary*}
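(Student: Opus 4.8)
The plan is to assemble the main Theorem~\ref{thmtm} with the characterization of stable rank one via property (TM) from \cite{fu2024tracialoscillationzerostable}, applied in both directions. First I would observe that, since $A$ has stable rank one, $A$ is stably finite, and hence, being unital and simple, it admits a normalized nontrivial $2$-quasitrace. Together with strict comparison and separability, the standing hypotheses of the equivalence in \cite{fu2024tracialoscillationzerostable} are met, so stable rank one of $A$ upgrades to property (TM) for $A$. This places us exactly in the setting of Theorem~\ref{thmtm}: $A$ is infinite-dimensional, stably finite, simple and unital with property (TM), $G$ is finite, and $\alpha$ has the weak tracial Rokhlin property. The theorem then yields that $A\rtimes_\alpha G$ has property (TM).

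The second step is to run the equivalence of \cite{fu2024tracialoscillationzerostable} backwards for the crossed product, which amounts to checking that $A\rtimes_\alpha G$ again satisfies the standing hypotheses. It is separable and unital because $G$ is finite and $A$ is separable and unital; it is simple because the weak tracial Rokhlin property forces $\alpha$ to be pointwise outer, so that simplicity of $A$ passes to $A\rtimes_\alpha G$; it has strict comparison by \cite[Theorem 4.5]{MR4336489}, since strict comparison is preserved under crossed products by finite-group actions with the weak tracial Rokhlin property; and it carries a densely defined nontrivial $2$-quasitrace, obtained by averaging a normalized $2$-quasitrace of $A$ over $G$ to produce a $G$-invariant one and then extending it along the canonical conditional expectation $E\colon A\rtimes_\alpha G\to A$, as in the framework of \cite{MR4336489}. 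With property (TM) already in hand from the first step, \cite{fu2024tracialoscillationzerostable} then gives that $A\rtimes_\alpha G$ has stable rank one.

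For the fixed point algebra I would use the standard projection $e=\tfrac{1}{|G|}\sum_{g\in G}u_g\in A\rtimes_\alpha G$ together with the identification $A^\alpha\cong e(A\rtimes_\alpha G)e$ as a corner. Since stable rank one passes to hereditary C*-subalgebras, and in particular to corners, and $A\rtimes_\alpha G$ has stable rank one by the previous step, it follows that $A^\alpha$ has stable rank one as well.

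I expect the genuinely delicate points to be the bookkeeping of hypotheses for the reverse application of \cite{fu2024tracialoscillationzerostable}: in particular, producing a bona fide densely defined nontrivial $2$-quasitrace on $A\rtimes_\alpha G$ (the averaging-and-extension construction must respect the $2$-quasitrace axioms on $M_2(A\rtimes_\alpha G)$, not merely the trace axioms) and confirming simplicity of the crossed product from the weak tracial Rokhlin property. The substantive mathematics is carried entirely by Theorem~\ref{thmtm}; once property (TM) has been shown to pass to the crossed product, the corollary reduces to invoking the property (TM)$\Leftrightarrow$stable-rank-one dictionary of \cite{fu2024tracialoscillationzerostable} and the corner realization of $A^\alpha$.
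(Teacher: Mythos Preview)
Your approach is essentially the paper's. One simplification worth noting: for the 2-quasitrace on $A\rtimes_\alpha G$, the paper avoids your averaging-and-extension construction (which you rightly flag as delicate for quasitraces) by instead observing that the crossed product embeds unitally in $M_n(A)$ and is therefore stably finite (Lemma~\ref{lemsfqtc}), whence $\mathrm{QT}(A\rtimes_\alpha G)\neq\emptyset$ by Theorem~\ref{thmsfqt}; for $A^\alpha$ the paper simply cites \cite[Lemma 4.3(5)]{MR4336489}, which is the corner realization you describe.
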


This paper is organized as follows. Section \ref{sec2} contains information on Cuntz comparison, order zero maps, several approximation lemmas, quasitraces, and property (TM). In Section \ref{sec3},  we present an approximation property for the crossed products by finite group actions with the weak tracial Roklin property. Section \ref{sec4} contains the proofs of the main theorem and corollary.

\section{Preliminaries}\label{sec2}
\subsection{Some basic notations}
We use the following notations in this paper.

\begin{notation} 
	Let $A$ be a C*-algebra. Denote by $A_+$ the set of all positive elements in $A$, and by $A^1$ the closed unit ball of $A$. Put $A_+^1=A_+\cap A^1$. Let $S\subset A$ be a subset. Let $a,b\in A$ and $\varepsilon>0$. We write $a\approx_\varepsilon b$ if $\|a-b\|<\varepsilon$. We write $a\in_\varepsilon S$ if there is an element $c\in S$ such that $\|a-c\|<\varepsilon$. Denote by $\mathrm{Ped}(A)$ the Pedersen ideal, i.e., the minimal dense ideal of $A$ (see \cite[Section 5.6]{MR0548006}). Put $\mathrm{Ped}(A)_+=\mathrm{Ped}(A)\cap A_+$, $\mathrm{Ped}(A)^1=\mathrm{Ped}(A)\cap A^1$ and $\mathrm{Ped}(A)_+^1=\mathrm{Ped}(A)\cap A_+^1$.
\end{notation}	
	
\begin{notation}
	Let $A$ be a C*-algebra, let $a\in A_+$, and let $\varepsilon>0$. Let $f\colon [0,\infty)\rightarrow [0,\infty)$ be the function $f(t)=\max\{0,t-\varepsilon\}$. Define $(a-\varepsilon)_+=f(a)$. 
\end{notation}

\begin{notation}
	Let $\varepsilon>0$. Define a continuous function $f_\varepsilon\colon [0,\infty)\rightarrow [0,1]$ by
	\[
	f_\varepsilon(t)=
	\begin{cases}
		0 & 0\leq t< \varepsilon/2,\\
		\rm{linear} & \varepsilon/2\leq t <\varepsilon,\\
		1 & t\geq \varepsilon.
	\end{cases}
	\]
\end{notation}

\begin{notation}
	Let $A$ be a C*-algebra. Denote by $K$ the C*-algebra of compact operators on an infinite-dimensional separable Hilbert space. Let $A\otimes K$ be the direct limit of the system $(M_n(A))_{n=1}^\infty$ using the usual upper-left corner embeddings. We will make the identifications $A\subset M_n(A)\subset A\otimes K$ whenever it is convenient. We write $\{e_{i,j}\}_{i,j=1}^n$ for the standard system of matrix units for $M_n$ and $1_n$ for the identity of $M_n$. We also write $\{e_{i,j}\}$ for the standard system of matrix units for $K$.
	
	Let $a\in M_n(A)$ and $b\in M_m(A)$. Denote by $a\oplus b$ the element
	\[
	\begin{pmatrix}
		a & 0 \\
		0 & b 
	\end{pmatrix}
	\in M_{n+m}(A).
	\]
\end{notation}

\subsection{Cuntz subequivalence}
The following definition is originally from \cite{MR0467332}.

\begin{definition}
	Let $A$ be a C*-algebra, and let $a,b\in (A\otimes K)_+$. The element $a$ is said to be Cuntz subequivalent to $b$ in $A$, written $a\precsim_A b$, if there is a sequence $(x_n)_{n=1}^\infty$ in $A\otimes K$ such that
	\[
	\lim_{n\rightarrow \infty}\|a-x_n^*bx_n\|=0.
	\]
	The element $a$ is said to be Cuntz equivalent to $b$ in $A$, written $a\sim_A b$, if both $a\precsim_A b$ and $b\precsim_A a$. When there is no confusion about the algebra $A$, we suppress it in the notation.
\end{definition}

\begin{remark}[{see \cite[Remark 1.2]{1408.5546}}]
	If $a,b\in M_n(A)_+$ for some $n\in \mathbb{N}$ and $a\precsim_A b$, then there is a sequence $(x_n)_{n=1}^\infty$ in $M_n(A)$ such that
	\[
	\lim_{n\rightarrow \infty}\|a-x_n^*bx_n\|=0.
	\]
\end{remark}

The following lemma contains some of the known results about Cuntz subequivalence which we will use. Part (1) is \cite[Lemma 2.2(i)]{MR1759891}, Part (2) is \cite[Lemma 2.5(ii)]{MR1759891}, Part (3) is \cite[Proposition 2.7(i)]{MR1759891}, Part (4) is \cite[Lemma2.8(iii)]{MR1759891}, and Part (5) is \cite[Lemma2.9]{MR1759891}.

\begin{lemma}\label{lemcuc} Let $A$ be a C*-algebra.
	\begin{enumerate}
		\item Let $a\in A_+$ and let $f\colon[0,\infty)\rightarrow [0,\infty)$ be a continuous function with $f(0)=0$. Then $f(a)\precsim a$.
		\item Let $a,b\in A_+$ and let $\varepsilon>0$. If $\|a-b\|<\varepsilon$, then $(a-\varepsilon)_+\precsim b$.
		\item Let $a,b\in A_+$. If $a\in \overline{bAb}$, then $a\precsim b$.
		\item Let $a,b\in A_+$. If $ab=0$, then $a+b\sim a\oplus b$.
		\item Let $a_1,a_2,b_1,b_2\in A_+$. If $a_1\precsim a_2$ and $b_1\precsim b_2$, then $a_1\oplus b_1\precsim a_2\oplus b_2$.
	\end{enumerate}
\end{lemma}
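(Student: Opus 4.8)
Since these five statements are standard facts about Cuntz comparison (the paper rightly attributes each to Rørdam), my plan is to organize a self-contained derivation around a small toolkit and then read off the individual parts. The toolkit I would set up first consists of three elementary principles together with transitivity of $\precsim$: (T1) if $0\le a\le b$ then $a\precsim b$; (T2) for any $x\in A$ one has $x^*x\sim xx^*$; and (T3) the truncation principle that $a\precsim b$ holds as soon as $(a-\varepsilon)_+\precsim b$ for every $\varepsilon>0$. Here (T3) is immediate from the definition combined with $\|a-(a-\varepsilon)_+\|\le\varepsilon$, and transitivity follows by composing and diagonalizing approximating sequences. For (T1) I would check that $x_n=(b+1/n)^{-1/2}a^{1/2}$ gives $a-x_n^*bx_n=\tfrac1n\,a^{1/2}(b+1/n)^{-1}a^{1/2}$, whose norm is at most $1/n$ because $a\le b\le b+1/n$ forces $\|(b+1/n)^{-1/2}a(b+1/n)^{-1/2}\|\le 1$. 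For (T2) I would deduce it from (T3): with $m(t)=t^{-1}\sqrt{(t-\varepsilon)_+}$ (and $m(0)=0$, continuous since the numerator vanishes near $0$), the element $y=x\,m(x^*x)$ satisfies $y^*(xx^*)y=(x^*x)^2m(x^*x)^2=(x^*x-\varepsilon)_+$ exactly, so $(x^*x-\varepsilon)_+\precsim xx^*$ for all $\varepsilon$, and symmetrically.

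With the toolkit in hand, parts (1), (4), (5) come out quickly. For (1) I would use the same functional-calculus trick as in (T2): put $g=(f-\varepsilon)_+$, which vanishes on a neighbourhood of $0$, set $h(t)=t^{-1}g(t)^{1/2}$ (continuous for that reason), and observe that $x=a^{1/2}h(a)$ gives $x^*ax=a^2h(a)^2=g(a)=(f(a)-\varepsilon)_+$; letting $\varepsilon\to 0$ and applying (T3) yields $f(a)\precsim a$. For (4), since $ab=0$ implies $a^{1/2}b^{1/2}=0$, the row $w=\begin{pmatrix}a^{1/2}&b^{1/2}\end{pmatrix}$ satisfies $ww^*=a+b$ and $w^*w=a\oplus b$, so $a+b\sim a\oplus b$ by (T2). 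For (5), writing $a_1=\lim_n x_n^*a_2x_n$ and $b_1=\lim_n y_n^*b_2y_n$, the diagonal elements $x_n\oplus y_n$ directly witness $a_1\oplus b_1\precsim a_2\oplus b_2$.

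The two parts carrying the real content are (2) and (3). For (3), given $a\in\overline{bAb}$, I would use the bounded approximate unit $h_n=b(b+1/n)^{-1}$ of $\overline{bAb}$ and check, via $\|bh_n-b\|\le 1/n$ and density, that $h_nah_n\to a$ in norm. Each $h_nah_n=br_nb$ with $r_n=(b+1/n)^{-1}a(b+1/n)^{-1}\ge 0$ obeys the operator inequality $h_nah_n\le\|r_n\|\,b^2$, so (T1) gives $h_nah_n\precsim b^2\sim b$ (the equivalence $b^2\sim b$ itself being a one-line consequence of (T1) and $b^2(b+1/n)^{-1}\to b$); then for each $\varepsilon>0$ one chooses $n$ with $\|a-h_nah_n\|<\varepsilon$ and applies (2) and transitivity to get $(a-\varepsilon)_+\precsim h_nah_n\precsim b$, whence (T3) yields $a\precsim b$.

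This reduces everything to part (2), which I regard as the main obstacle and the one genuinely non-formal ingredient. The naive hope that $\|a-b\|<\varepsilon$ should force $(a-\varepsilon)_+\le b$ is false: the map $x\mapsto x_+$ is not operator monotone, as elementary $2\times 2$ examples show, so (2) is not a mere order comparison. I would prove it as the Kirchberg--Rørdam perturbation lemma: from $\|a-b\|<\varepsilon$ one has $a\le b+\varepsilon$, and a short functional-calculus construction produces a contraction $d$ with $(a-\varepsilon)_+=d^*bd$; since this is exactly the cited \cite[Lemma 2.5(ii)]{MR1759891}, I would quote it rather than reprove it. Once (2) is granted, the dependencies are acyclic---(2) feeds (3), while (1), (4), (5) rest only on the toolkit---so the full lemma follows.
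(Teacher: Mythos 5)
Your proposal is correct, but be aware that the paper offers no proof of this lemma at all: it is stated as a collection of known facts, with each part cited to Kirchberg--R{\o}rdam \cite{MR1759891} (parts (1)--(5) to Lemma 2.2(i), Lemma 2.5(ii), Proposition 2.7(i), Lemma 2.8(iii), and Lemma 2.9 there, respectively). So your route is genuinely different in form: you reconstruct self-contained proofs of (1), (3), (4), (5) from the toolkit (T1)--(T3), and you cite the literature only for (2) --- which is exactly what the paper does for that part, and your diagnosis that (2) is the one non-formal ingredient (the R{\o}rdam perturbation lemma, not an order comparison, since $x\mapsto x_+$ is not operator monotone) is accurate. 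Your individual arguments check out: the computation $a-x_n^*bx_n=\tfrac1n a^{1/2}(b+1/n)^{-1}a^{1/2}$ with norm at most $1/n$ for (T1), the exact identity $y^*(xx^*)y=(x^*x-\varepsilon)_+$ for (T2), the row-matrix trick $ww^*=a+b$, $w^*w=a\oplus b$ for (4), and the reduction of (3) to (2) via the approximate unit $h_n=b(b+1/n)^{-1}$ are all the standard proofs from the cited source, so your derivation buys self-containedness at essentially no cost in length. Two cosmetic wrinkles worth fixing: in your one-line claim $b\sim b^2$ via $b^2(b+1/n)^{-1}\to b$, the implicit witness $(b+1/n)^{-1/2}$ lies in the unitization $\widetilde{A}$ rather than in $A$; this is repaired either by the pointwise inequality $(b-\varepsilon)_+\le \varepsilon^{-1}b^2$ (valid since $t-\varepsilon\le t^2/\varepsilon$ for $t\ge\varepsilon$) combined with (T1) and (T3), or by inserting a factor of $b^{1/2}$ into the witness. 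Similarly, in (5) you should invoke the remark the paper records immediately after the definition of $\precsim$ (that for $a,b\in M_n(A)_+$ the witnessing sequence may be chosen in $M_n(A)$) so that $x_n\oplus y_n$ makes sense at the right matrix size; with these trivial adjustments the dependency structure you describe is indeed acyclic and the whole lemma follows.
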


The following lemma is taken from \cite[Lemma 3.3]{MR4107516}. Note that for any $a\in A_+$ and any $\sigma>0$, the hereditary C*-subalgebra of $A$ generated by $(a-\sigma)_+$ and the hereditary C*-subalgebra of $A$ generated by $f_{2\sigma}(a)$ are the same.

\begin{lemma}[\cite{MR4107516}]\label{lemcup}
	Let $\varepsilon>0$ and $\sigma>0$ be given. There exists $\delta>0$ satisfying the following condition: Whenever $A$ is a C*-algebra and $a,b\in A^1_+$ satisfy
	\[
	\|a-b\|<\delta,
	\]
	then there exists an injective $\ast$-homomorphism $\varphi\colon \overline{(a-\sigma)_+A(a-\sigma)_+}\rightarrow \overline{bAb}$ satisfying
	\[
	\|\varphi(x)-x\|<\varepsilon\|x\|
	\]
	for all $ x\in \overline{(a-\sigma)_+A(a-\sigma)_+}$.
\end{lemma}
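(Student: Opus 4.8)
The plan is to realize the embedding as conjugation $\varphi(x)=vxv^{*}$ by a single element $v\in A$ whose source projection $v^{*}v$ acts as a unit on the domain $D_a:=\overline{(a-\sigma)_+A(a-\sigma)_+}$ and whose range sits under the support of $b$. Fix $\delta\in(0,\sigma)$, to be constrained in terms of $\varepsilon$ and $\sigma$ at the end, and suppose $\|a-b\|<\delta$. A standard refinement of Lemma~\ref{lemcuc}(2) (R{\o}rdam's comparison lemma) provides $w\in A$ with
\[
w^{*}w=(a-\delta)_+\qquad\text{and}\qquad ww^{*}\in\overline{bAb}.
\]
Choose a bounded continuous function $\psi\colon[0,\infty)\to[0,\infty)$ with $\psi(s)=s^{-1/2}$ for $s\ge\sigma-\delta$ and $\psi(s)=0$ for $s$ near $0$, and set $v=w\,\psi(w^{*}w)$.

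Then $v^{*}v=\psi(w^{*}w)^{2}\,w^{*}w$ is a continuous function of $(a-\delta)_+$, hence of $a$, equal to $1$ on $\{a\ge\sigma\}$; since $(a-\sigma)_+$ is supported there, $v^{*}v$ is a two-sided unit on $D_a$. This is the exact analogue, for the cutoff $(a-\sigma)_+$, of the role played by $f_{2\sigma}(a)$ in the observation preceding the lemma. The algebraic checks are now immediate: for $x,y\in D_a$,
\[
(vxv^{*})(vyv^{*})=vx(v^{*}v)yv^{*}=vxyv^{*},\qquad v^{*}(vxv^{*})v=(v^{*}v)x(v^{*}v)=x,
\]
so $\varphi(x)=vxv^{*}$ is an injective $*$-homomorphism on $D_a$; and since the range projection of $v$ lies under that of $ww^{*}$, hence under the support of $b$, we get $\varphi(x)\in\overline{bAb}$. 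Everything so far stays inside $A$.

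The heart of the matter is the estimate $\|\varphi(x)-x\|<\varepsilon\|x\|$. I would run the identical construction with $a$ in place of $b$, obtaining the ``model'' $w_{0}=(a-\delta)_+^{1/2}$ and $v_{0}=w_{0}\psi(w_{0}^{*}w_{0})$; here everything commutes with $a$, $v_{0}$ is a function of $a$, and $v_{0}xv_{0}^{*}=x$ for all $x\in D_a$. Writing $e'=\chi_{(\sigma,\infty)}(a)\in A^{**}$, which fixes $D_a$ so that $x=e'xe'$ and $v_{0}e'=e'$, one has
\[
\varphi(x)-x=(ve')\,x\,(ve')^{*}-(v_{0}e')\,x\,(v_{0}e')^{*},
\]
whence $\|\varphi(x)-x\|\le(1+\|ve'\|)\,\|ve'-e'\|\,\|x\|$. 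Crucially $w^{*}w=w_{0}^{*}w_{0}=(a-\delta)_+$, so $\psi(w^{*}w)=\psi(w_{0}^{*}w_{0})$ is one and the same function of $a$, giving $v-v_{0}=(w-w_{0})\psi(w^{*}w)$ and therefore
\[
\|ve'-e'\|=\|(w-w_{0})\psi(w^{*}w)e'\|\le(\sigma-\delta)^{-1/2}\,\|w-w_{0}\|.
\]
Thus the whole estimate reduces to the single bound $\|w-w_{0}\|<\tfrac12\varepsilon(\sigma-\delta)^{1/2}$.

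The main obstacle is precisely this last bound: the implementing elements $w$ (with $ww^{*}\in\overline{bAb}$) and $w_{0}$ (with $w_{0}w_{0}^{*}\in\overline{aAa}$) share the same source $(a-\delta)_+$, and must be shown to be close in norm once $\|a-b\|$ is small. Because $a$ and $b$ need not commute, this is not formal; it is a perturbation-of-polar-decomposition statement. What makes it work is the spectral gap $(a-\delta)_+\ge\sigma-\delta>0$ on the support of $(a-\sigma)_+$, which allows the relevant square roots to be treated as Lipschitz functions, together with the norm-continuity of the functional calculus and the inequality $\|b^{1/2}-a^{1/2}\|\le\|a-b\|^{1/2}$. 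Choosing $w=w(b)$ to depend norm-continuously on the positive element (so that $w(a)=w_{0}$) and taking $\delta$ small enough as a function of $\varepsilon$ and $\sigma$ then yields $\|w-w_{0}\|<\tfrac12\varepsilon(\sigma-\delta)^{1/2}$, and hence $\|\varphi(x)-x\|<\varepsilon\|x\|$, completing the proof.
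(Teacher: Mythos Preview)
The paper does not supply its own proof of this lemma; it is quoted verbatim from \cite[Lemma 3.3]{MR4107516}. So there is no in-paper argument to compare against, and your proposal has to stand on its own.

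Your strategy is the natural one and the algebra up to the last paragraph is clean: with $w^*w=(a-\delta)_+$ and $ww^*\in\overline{bAb}$, setting $v=w\psi(w^*w)$ makes $v^*v$ a continuous function of $a$ equal to $1$ on $[\sigma,1]$, so $\varphi(x)=vxv^*$ is indeed an injective $*$-homomorphism into $\overline{bAb}$. The reduction of the norm estimate to $\|(w-w_0)\psi((a-\delta)_+)e'\|<\tfrac12\varepsilon$ is also correct.

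The gap is the final paragraph. You need $w^*w=(a-\delta)_+$ \emph{exactly} (otherwise $v^*v$ is not a function of $a$ and the homomorphism property breaks), and for that you invoke R{\o}rdam's lemma---which furnishes only existence of $w$, with no control on which $w$ you get. You then assert one can ``choose $w=w(b)$ to depend norm-continuously on $b$ with $w(a)=w_0$'', but that is precisely the content to be proved, and nothing you have written establishes it. The tools you list (spectral gap, Lipschitz square roots, $\|a^{1/2}-b^{1/2}\|\le\|a-b\|^{1/2}$) are the right ingredients, but they do not by themselves produce a continuous selection satisfying the \emph{exact} constraint $w(b)^*w(b)=(a-\delta)_+$. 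One way to close the gap: from $\|a-b\|<\delta$ one has $b\ge(a-\delta)_+$, so the Douglas-type limit $u=\lim_n(a-\delta)_+^{1/2}(b+1/n)^{-1/2}$ exists in $A$ and $w:=b^{1/2}u^*$ satisfies $w^*w=(a-\delta)_+$ exactly while being explicit in $b$. The remaining estimate then comes down to controlling $(b+1/n)^{-1/2}e'$ via $e'be'\ge(\sigma-\delta)e'$; this is where the spectral gap is actually used, and it needs to be written out. As it stands you have correctly identified the obstacle but not overcome it.
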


\subsection{Order zero maps}

\begin{definition}[{\cite[Definition 2.3]{MR2545617}}]
	Let $A$ and $B$ be C*-algebras and let $\varphi\colon A\rightarrow B$ be a completely positive, abbreviated to c.p., map (c.p.c. if the map is contractive). The map $\varphi$ is said to have order zero, if for all $a,b\in A_+$,
	\[
	ab=0 \Rightarrow \varphi(a)\varphi(b)=0.
	\]
\end{definition}

The following is the structure theorem for c.p. order zero maps.

\begin{theorem}[{\cite[Theorem 3.3]{MR2545617}}]\label{thmozs}
	Let $A$ and $B$ be C*-algebras, and let $\varphi\colon A\rightarrow B$ be a c.p. order zero map. Set $C=C^*(\varphi(A))\subset B$. Then there is a positive element $h\in \mathcal{M}(C)\cap C'$ with $\|h\|=\|\varphi\|$ and a $\ast$-homomorphism $\pi\colon A\rightarrow \mathcal{M}(C)\cap \{h\}'$, where $\mathcal{M}(C)$ is the multiplier algebra of $C$, such that
	\[
	\varphi(a)=h\pi(a)
	\]
	for all $a\in A$. If $A$ is unital, then $h=\varphi(1_A)\in C$.
\end{theorem}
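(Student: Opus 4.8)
The plan is to derive the structure theorem from a minimal Stinespring dilation together with the order zero relation, which force a single multiplicative identity out of which $h$ and $\pi$ can be read off. First I would reduce to the contractive unital case: if $\varphi=0$ there is nothing to prove, and otherwise, replacing $\varphi$ by $\varphi/\|\varphi\|$, I may assume $\|\varphi\|\le 1$; I would treat $A$ unital first and recover the general case by unitization at the end. So assume $A$ is unital and $\varphi$ is c.p.c., put $h=\varphi(1_A)$ (so $0\le h\le 1$), represent $C=C^*(\varphi(A))$ faithfully and nondegenerately on a Hilbert space $H_0$, and let $(\sigma,H,V)$ be the minimal Stinespring dilation, so that $\sigma\colon A\to B(H)$ is a unital $\ast$-representation, $V\colon H_0\to H$ satisfies $V^*V=h$ and $H=\overline{\sigma(A)VH_0}$, and $\varphi(a)=V^*\sigma(a)V$ for all $a\in A$.

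The heart of the matter, and the step I expect to be the main obstacle, is to extract from the order zero hypothesis the key identity
\[
\varphi(a)\varphi(b)=\varphi(1_A)\,\varphi(ab)=\varphi(ab)\,\varphi(1_A),\qquad a,b\in A.\quad (\star)
\]
Equivalently, $(\star)$ asserts that $VV^*$ commutes with $\sigma(A)$, so that the compression $V^*\sigma(\cdot)V$ becomes multiplicative up to the central weight $h$. To prove it I would first establish the diagonal case $\varphi(a)^2=h\,\varphi(a^2)$ for $a\in A_+$ by working inside the commutative subalgebra $C^*(a)\cong C_0(\mathrm{sp}(a)\setminus\{0\})$: for a partition of unity $g_1,\dots,g_n$ on the spectrum with $g_ig_j=0$ whenever $|i-j|\ge 2$, the order zero relation gives $\varphi(g_i(a))\varphi(g_j(a))=0$ for $|i-j|\ge 2$, and refining the partition lets one compare $\varphi(a)^2$ with $h\varphi(a^2)$ in the limit. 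The symmetric part of $(\star)$ then follows by polarizing the diagonal identity, while the antisymmetric (commutator) part requires a separate argument controlling $\varphi(a)\varphi(b)-\varphi(b)\varphi(a)$; this is precisely where both complete positivity and order zero are genuinely needed.

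Once $(\star)$ is available the remaining steps are formal. Setting $b=1_A$ shows $h\varphi(a)=\varphi(a)h$, so $h$ is central in $C$, i.e.\ $h\in\mathcal{M}(C)\cap C'$. Applying $(\star)$ to the pair $a^*,a$ gives $\varphi(a)^*\varphi(a)=h\,\varphi(a^*a)\in\overline{hC}$; since $h$ is central, $\overline{hC}$ is a closed two-sided ideal of $C$, and therefore $\varphi(a)\in\overline{hC}$. As $\varphi(A)$ generates $C$, we conclude $\overline{hC}=C$, so that $h$ is strictly positive in $C$ and $(h^{1/n})_n$ is an approximate unit for $C$. I would then define $\pi(a)\in\mathcal{M}(C)$ as the strict limit of the regularized quotients $\varphi(a)g_n(h)$, where $g_n$ approximates $t\mapsto t^{-1}$ away from $0$; strict positivity of $h$ guarantees convergence and makes multiplication by $h$ injective on $\mathcal{M}(C)$. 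Using $(\star)$ one verifies that $h\pi(a)=\varphi(a)$, that $\pi$ is linear and $\ast$-preserving, that $\pi(a)$ commutes with $h$, and that $\pi$ is multiplicative by cancelling $h^2$ in $h^2\pi(a)\pi(b)=\varphi(a)\varphi(b)=h\varphi(ab)=h^2\pi(ab)$. Thus $\pi\colon A\to\mathcal{M}(C)\cap\{h\}'$ is a $\ast$-homomorphism with $\varphi=h\pi$, while $\|h\|=\|\varphi(1_A)\|=\|\varphi\|$ and $h=\varphi(1_A)\in C$ are immediate.

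Finally, for non-unital $A$ I would extend $\varphi$ to a c.p.c.\ order zero map $\tilde\varphi\colon\tilde A\to\mathcal{M}(C)$ on the unitization, defining $\tilde\varphi(1_{\tilde A})$ as the strict limit of $\varphi(e_\lambda)$ along an approximate unit $(e_\lambda)$ of $A$ — this limit exists in $\mathcal{M}(C)$ because $\varphi$ maps into $C$ and is order zero — then apply the unital case to $\tilde\varphi$ and restrict back to $A$. The only care needed here is to check that the resulting $h$ and $\pi$ land in $\mathcal{M}(C)$ and $\{h\}'$ as claimed and are independent of the chosen approximate unit.
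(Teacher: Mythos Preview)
The paper does not prove this theorem; it is quoted from Winter--Zacharias \cite[Theorem~3.3]{MR2545617} and used only as a black box, so there is no proof here to compare your proposal against.

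Your outline nevertheless follows the Winter--Zacharias architecture: reduce to the unital contractive case, take a minimal Stinespring dilation $(\sigma,H,V)$, show $VV^*\in\sigma(A)'$ (equivalently your identity $(\star)$), and then construct $\pi$ by cancelling the strictly positive central element $h$. Everything you write after $(\star)$ is correct and routine. The one genuine gap is your argument \emph{for} $(\star)$: the partition-of-unity trick you sketch, followed by polarization, yields only the symmetrized identity $\varphi(a)\varphi(b)+\varphi(b)\varphi(a)=h\,\varphi(ab+ba)$, and you explicitly defer the commutator half to ``a separate argument'' without indicating what that argument is. Since that half is precisely the crux, the plan stops just short of the decisive step. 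In the original proof the commutation $[VV^*,\sigma(A)]=0$ is established directly in the dilation picture, using order zero on orthogonal functional-calculus elements together with minimality of the dilation, which sidesteps the symmetric/antisymmetric split altogether; you would likely find that route cleaner than trying to close the gap you have flagged.
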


The following is known as the functional calculus for order zero maps.

\begin{definition}[{\cite[Corollary 4.2]{MR2545617}}]\label{cpf}
	Let $\varphi\colon A\rightarrow B$ be a c.p.c. order zero map, let $C, h, \pi$ be as above, and let $f\in C_0(0,1]$ be a positive function of norm at most one. Then the map $f(\varphi)\colon A\rightarrow B$ defined by
	\[
	f(\varphi)(a)=f(h)\pi(a)
	\]
	for all $a\in A$ is a well-defined c.p.c. order zero map.
\end{definition}

The following lemma is a simple observation.
\begin{lemma}\label{lemcpf}
	Let $\varphi\colon A\rightarrow B$ be a c.p.c. order zero map, and let $f\in C_0(0,1]$ be a positive function of norm at most one. If $p\in A$ is a projection, then
	\[
	f(\varphi)(p)=f(\varphi(p)).
	\]
\end{lemma}

\begin{proof}
	Let $h, \pi$ be as in Theorem \ref{thmozs}. If $f(t)=t^n$ for some $n\in \mathbb{N}$, then
	\[
	f(\varphi(p))=(h\pi(p))^n=h^n\pi(p)=f(\varphi)(p).
	\]
	
	Therefore, the statement holds for any polynomial with zero constant term. In general, there exists a sequence of polynomials with zero constant term which converges uniformly to $f$ on $[0,1]$. The statement follows from the spectral theorem.
\end{proof}

\subsection{Approximation lemmas}
This subsection contains several known approximation results which will be needed. Part (1) and (3) of the following are \cite[Lemma 1.5 and Lemma 1.6]{MR4078704}. Part (2) is a special case of \cite[Lemma 2.5.11(2)]{MR1884366} but without proof.

\begin{lemma}\label{approx}
	\
	\begin{enumerate}
		\item Let $f\colon [0,1]\rightarrow \mathbb{C}$ be a continuous function. Then for any $\varepsilon>0$, there exists $\delta>0$ such that whenever $A$ is a C*-algebra and $a\in A_+^1$, $b\in A^1$ satisfy $\|ab-ba\|<\delta$, then
		\[
		\|f(a)b-bf(a)\|<\varepsilon.
		\]
		\item Let $f\colon [-1,1]\rightarrow \mathbb{C}$ be a continuous function. Then for any $\varepsilon>0$, there exists $\delta>0$ such that whenever $A$ is a C*-algebra and $a,b\in A^1$ are self-adjoint elements satisfying $\|a-b\|<\delta$, then
		\[
		\|f(a)-f(b)\|<\varepsilon.
		\]
		\item Let $f\colon [0,1]\rightarrow \mathbb{C}$ be a continuous function with $f(0)=0$. Then for any $\varepsilon>0$, there exists $\delta>0$ such that whenever $A$ is a C*-algebra, $B\subset A$ is a subalgebra, and $a\in A^1$ and $b\in B_+^1$ satisfy $ba\in_\delta B$, then
		\[
		f(b)a\in_\varepsilon B.
		\]
	\end{enumerate}
\end{lemma}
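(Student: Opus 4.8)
The plan is to reduce all three parts to polynomial approximation together with elementary telescoping estimates for monomials; the point in each case is that the resulting bounds depend only on the coefficients of the approximating polynomial and on the fact that all relevant elements have norm at most one, hence are uniform over the class of all C*-algebras. The two monomial estimates I will use are that, for $\|a\|\le 1$ and $\|b\|\le 1$,
\[
\|a^k b - b a^k\| \le k\,\|ab-ba\|, \qquad \|a^k - b^k\| \le k\,\|a-b\|,
\]
both obtained from the telescoping identities $a^k b - b a^k = \sum_{j=0}^{k-1} a^j(ab-ba)a^{k-1-j}$ and $a^k - b^k = \sum_{j=0}^{k-1} a^j(a-b)b^{k-1-j}$.

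For Part (1), given $\varepsilon>0$ I first choose, by the Weierstrass theorem, a polynomial $p(t)=\sum_{k=0}^N c_k t^k$ with $\|f-p\|_\infty < \varepsilon/3$ on $[0,1]$. Since $a\in A_+^1$ has spectrum in $[0,1]$, the continuous functional calculus gives $\|f(a)-p(a)\|<\varepsilon/3$, so $\|f(a)b-bf(a)\|$ differs from $\|p(a)b-bp(a)\|$ by at most $2\|f(a)-p(a)\|\le 2\varepsilon/3$ (the constant term of $p$ drops out of the commutator). The first monomial estimate yields $\|p(a)b-bp(a)\|\le\bigl(\sum_{k=1}^N k|c_k|\bigr)\|ab-ba\|$, so setting $\delta=\varepsilon/\bigl(3\sum_{k=1}^N k|c_k|\bigr)$ finishes this part. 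Part (2) is identical in spirit: approximate $f$ on $[-1,1]$ by a polynomial $p$ with $\|f-p\|_\infty<\varepsilon/3$, use the spectral theorem to replace $f$ by $p$ on both $a$ and $b$ at a cost of $2\varepsilon/3$, and then apply the second monomial estimate to get $\|p(a)-p(b)\|\le\bigl(\sum_{k=1}^N k|c_k|\bigr)\|a-b\|$, choosing $\delta$ accordingly.

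Part (3) is where the hypothesis $f(0)=0$ enters. Because $f(0)=0$, I can choose the approximating polynomial to have zero constant term, i.e. $p(t)=\sum_{k=1}^N c_k t^k$ with $\|f-p\|_\infty<\varepsilon/2$ on $[0,1]$. Writing $ba\approx_\delta c$ with $c\in B$, for each $k\ge 1$ one has $b^k a = b^{k-1}(ba)\approx_\delta b^{k-1}c$, and $b^{k-1}c\in B$ since $B$ is a subalgebra containing $b$; hence $b^k a\in_\delta B$ for every $k\ge 1$. Summing, $p(b)a=\sum_{k=1}^N c_k b^k a$ lies within $\bigl(\sum_{k=1}^N|c_k|\bigr)\delta$ of $B$, while $\|f(b)a-p(b)a\|\le\|f(b)-p(b)\|<\varepsilon/2$ by the spectral theorem. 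Taking $\delta=\varepsilon/\bigl(2\sum_{k=1}^N|c_k|\bigr)$ gives $f(b)a\in_\varepsilon B$.

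None of the steps is genuinely hard; the only points requiring care are (i) verifying that the zero-constant-term polynomial exists in Part (3), which is exactly what $f(0)=0$ guarantees and is essential, since otherwise a term $c_0 a$ would appear and $a$ need not lie near $B$, and (ii) confirming that every estimate is uniform in the ambient C*-algebra, which holds automatically because each bound is expressed purely through the fixed coefficients $c_k$ and the norm constraints. I would therefore expect the write-up to be short, with the telescoping identities and the single-line reduction $b^k a = b^{k-1}(ba)$ in Part (3) carrying all the content.
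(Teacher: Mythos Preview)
Your proposal is correct and matches the paper's approach: the paper only writes out Part~(2), using exactly the polynomial-approximation-plus-telescoping argument you give (with the cruder bound $Mn^2\delta$ in place of your $\bigl(\sum k|c_k|\bigr)\delta$), while Parts~(1) and~(3) are cited from Archey--Phillips without proof. Your arguments for those parts are the standard ones and go through as written.
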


\begin{proof}
	(2) Choose $n\in \mathbb{N}$ and a polynomial $p(t)=\sum_{i=0}^n\lambda_it^i$ with coefficients $\lambda_i\in\mathbb{C}$ for all $i=0,1,\cdots,n$ such that
	\[
	|p(t)-f(t)|<\varepsilon/3
	\]
	for all $t\in [-1,1]$. Define
	\[
	M=\max\{|\lambda_0|,|\lambda_1|,\cdots,|\lambda_n|\}+1 \text{ and } \delta=\frac{\varepsilon}{3Mn^2}.
	\]
	
	Let $A,a$ and $b$ be as in the hypotheses. For any $0\leq i\leq n$, we have
	\begin{align*}
		\|a^i-b^i\| & \leq \|a^i-a^{i-1}b\|+\|a^{i-1}b-a^{i-2}b^2\|+\cdots+\|ab^{i-1}-b^i\|\\
		& \leq\|a^{i-1}\|\|a-b\|+\|a^{i-2}\|\|a-b\|\|b\|+\cdots+\|a-b\|\|b^{i-1}\|\\
		& \leq i\|a-b\|<n\delta.
	\end{align*}
	Therefore,
	\[
	\|p(a)-p(b)\|\leq \sum_{i=0}^n|\lambda_i|\|a^i-b^i\|<Mn^2\delta=\varepsilon/3,
	\]
	and
	\begin{align*}
		\|f(a)-f(b)\| & \leq \|f(a)-p(a)\|+\|p(a)-p(b)\|+\|p(b)-f(b)\|\\
		& < \varepsilon/3+\varepsilon/3+\varepsilon/3=\varepsilon.
	\end{align*}
\end{proof}

\subsection{Quasitraces}
The following definition is from \cite[Definition 2.22]{MR2032998}.

\begin{definition}
	Let $A$ be a C*-algebra. A quasitrace on $A$ is a function $\tau\colon A_+\rightarrow [0,\infty]$ with $\tau(0)=0$ such that
	\begin{enumerate}
		\item $\tau(x^*x)=\tau(xx^*)$ for all $x\in A$.
		\item $\tau(a+b)=\tau(a)+\tau(b)$ for all commuting elements $a,b\in A_+$.
	\end{enumerate}
	A 2-quasitrace on $A$ is a quasitrace $\tau$ that extends to a quasitrace $\tau_2$ on $M_2(A)$ with $\tau_2(a\otimes e_{1,1})=\tau(a)$ for all $a\in A_+$. Set $\mathrm{Dom}_{1/2}(\tau)=\{a\in A\colon \tau(a^*a)<\infty\}$. A quasitrace $\tau$ is said to be densely defined if $\mathrm{Dom}_{1/2}(\tau)$ is dense in $A$, and $\tau$ is said to be bounded if $\mathrm{Dom}_{1/2}(\tau)=A$.
\end{definition}

Denote by $\widetilde{\mathrm{QT}}(A)$ the set of densely defined, lower semicontinuous 2-quasitrace on $A$.

\begin{remark}[{see \cite[Remark 2.27]{MR2032998}}]\label{remqt}
    (1) If $\tau$ is a lower semicontinuous 2-quasitrace on $A$, then $\tau$ is order-preserving on $A_+$.
    
    (2) Every lower semicontinuous 2-quasitrace $\tau$ on $A$ has a unique extension to a lower semicontinuous quasitrace $\tau_\infty$ on $A\otimes K$ with $\tau_\infty(a\otimes e_{1,1})=\tau(a)$ for all $a\in A_+$. In what follows, we will identify $\tau$ with $\tau_\infty$.
    
    (3) Let $A$ be a simple C*-algebra and let $a\in A_+\setminus\{0\}$. Then every $\tau\in \widetilde{\mathrm{QT}}(\overline{aAa})$ can be uniquely extended to a $\tilde{\tau}\in \widetilde{\mathrm{QT}}(A)$. Clearly, for every $\tilde{\tau}\in \widetilde{\mathrm{QT}}(A)$, the restriction $\tilde{\tau}\vert_{\overline{aAa}}\in\widetilde{\mathrm{QT}}(\overline{aAa})$. In what follows, we will identify $\widetilde{\mathrm{QT}}(\overline{aAa})$ with $\widetilde{\mathrm{QT}}(A)$.
\end{remark}

\begin{definition}
	For any $\tau \in \widetilde{\mathrm{QT}}(A)$ and any C*-subalgebra $B\subset A\otimes K$, define
	\[
	\|\tau\vert_B\|=\sup\{\tau(a)\colon a\in B_+^1\}.
	\]
\end{definition}

Denote by $\mathrm{QT}(A)$ the set of bounded 2-quasitrace on $A$ with $\|\tau\vert_A\|=1$.

\begin{theorem}[\cite{MR0616272}, \cite{MR0650185}]\label{thmsfqt}
	Let $A$ be a stably finite unital C*-algebra. Then $\mathrm{QT}(A)\neq \emptyset$.
\end{theorem}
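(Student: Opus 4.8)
The plan is to realize the desired quasitrace as a lower semicontinuous dimension function, exploiting the Blackadar--Handelman correspondence between normalized lower semicontinuous $2$-quasitraces and dimension functions on the Cuntz semigroup. First I would form the ordered abelian semigroup $W(A)$ of Cuntz equivalence classes $\langle a\rangle$ of positive elements $a\in\bigcup_{n=1}^\infty M_n(A)_+$, with addition induced by $\oplus$ and order induced by $\precsim$. In $W(A)$ the class $\langle 1_A\rangle$ is an order unit: for $a\in M_n(A)_+$ one has $a\le\|a\|1_n$, hence $a\precsim 1_n$ and $\langle a\rangle\le n\langle 1_A\rangle$ by Lemma \ref{lemcuc}. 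A normalized dimension function is precisely an additive, order-preserving map $d\colon W(A)\to[0,\infty)$ with $d(\langle 1_A\rangle)=1$; equivalently, after passing to the Grothendieck group $G(A)$ of $W(A)$ with its induced positive cone, it is a state on the ordered group $(G(A),\langle 1_A\rangle)$.

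The existence of such a state is precisely where stable finiteness enters. By the standard existence theorem for states on preordered abelian groups with order unit, a state $s$ normalized by $s(\langle 1_A\rangle)=1$ exists unless the unit degenerates, i.e.\ unless $\langle 1_A\rangle\le 0$ in $G(A)$. Such a degeneration unwinds to a relation $(m+1)\langle 1_A\rangle\le m\langle 1_A\rangle$ in $W(A)$ for some $m$, that is, $1_{m+1}\precsim 1_m$ in $M_{m+1}(A)$; since these are projections, this makes $1_{m+1}$ Murray--von Neumann equivalent to a proper subprojection of itself, so $M_{m+1}(A)$ is infinite, contradicting stable finiteness. Hence the unit does not degenerate, the Hahn--Banach separation argument produces a state $s$, and restricting $s$ to $W(A)$ yields a normalized dimension function $d$.

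Finally I would pass from $d$ to a $2$-quasitrace. Once $d$ is (made) lower semicontinuous, define $\tau\colon A_+\to[0,\infty)$ by the layer-cake formula
\[
\tau(a)=\int_0^\infty d\big((a-t)_+\big)\,dt.
\]
Since $(a-t)_+\precsim 1_A$ for all $t$ and $(a-t)_+=0$ for $t\ge\|a\|$, monotonicity of $d$ gives $\tau(a)\le\|a\|$, so $\tau$ is bounded with $\mathrm{Dom}_{1/2}(\tau)=A$; a direct computation gives $\tau(1_A)=1$ and hence $\|\tau\vert_A\|=1$, placing $\tau$ in $\mathrm{QT}(A)$. It then remains to verify $\tau(0)=0$, the identity $\tau(x^*x)=\tau(xx^*)$, additivity on commuting positive elements, and the extension to a quasitrace on $M_2(A)$ compatible with $\tau$.

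The main obstacle lies in this last step, and it has two genuinely non-formal ingredients. The first is promoting the abstractly produced dimension function to a lower semicontinuous one (or constructing a lower semicontinuous dimension function directly), since only lower semicontinuous dimension functions arise from quasitraces, and an arbitrary state on $(G(A),\langle 1_A\rangle)$ need not enjoy this continuity. The second is checking that the layer-cake reconstruction actually satisfies the quasitrace axioms---in particular the commutation invariance $\tau(x^*x)=\tau(xx^*)$ and the compatibility with the $M_2$-amplification---which does not follow formally from $d$ being an additive monotone dimension function. These two points are exactly the technical core carried out in \cite{MR0616272} and \cite{MR0650185}.
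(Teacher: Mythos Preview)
The paper does not supply a proof of this theorem; it is quoted as a known result with citations to \cite{MR0616272} and \cite{MR0650185}, and no argument appears in the text. Your sketch is essentially the standard Blackadar--Handelman route carried out in those references: build $W(A)$, use stable finiteness to prevent degeneration of the order unit and so obtain a state (dimension function), then integrate to recover a bounded normalized $2$-quasitrace. You have also correctly flagged the two genuinely nontrivial points---promoting to a lower semicontinuous dimension function and verifying the quasitrace axioms for the layer-cake integral---as the content of the cited papers rather than routine checks.
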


\begin{definition}
	Let $A$ be a C*-algebra with $\widetilde{\mathrm{QT}}(A)\neq\emptyset$ and let $S\subset \widetilde{\mathrm{QT}}(A)$ be a non-empty subset. For any $a\in A\otimes K$, define
	\[
	\|a\|_{2,S}=\sup\{(\tau(a^*a))^{1/2}\colon \tau\in S\}.
	\]
\end{definition}

\begin{lemma}[{cf. \cite[Lemma 3.5]{MR3241179}}]\label{lemqtn}
	Let $A$ be a C*-algebra and let $\tau\in \widetilde{\mathrm{QT}}(A)$. Then
	\begin{enumerate}
		\item $\tau(a+b)^{1/2}\leq \tau(a)^{1/2}+\tau(b)^{1/2}$ for all $a,b\in (A\otimes K)_+$.
		\item $\|x+y\|_{2, \tau}^{2/3}\leq \|x\|_{2, \tau}^{2/3}+\|y\|_{2, \tau}^{2/3}$ for all $x,y\in A\otimes K$.
		\item $\|xy\|_{2, \tau}\leq \|x\|\|y\|_{2, \tau}$ and $\|xy\|_{2, \tau}\leq \|x\|_{2, \tau}\|y\|$ for all $x,y\in A\otimes K$.
	\end{enumerate}
\end{lemma}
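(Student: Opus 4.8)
The plan is to treat the three parts in the order (3), (1), (2): part (3) is immediate, part (1) contains the one genuinely new idea, and part (2) reduces to part (1). Throughout I would use only two structural facts about $\tau$: that it is order preserving on $(A\otimes K)_+$ (Remark \ref{remqt}(1)), and that, being a $2$-quasitrace, it extends to each matrix amplification $M_n(A\otimes K)\cong A\otimes K$, where it remains order preserving and homogeneous, satisfies $\tau(z^*z)=\tau(zz^*)$, and is additive on positive elements with orthogonal supports (in particular on block-diagonal matrices). For (3) I would write $\|xy\|_{2,\tau}^2=\tau(y^*x^*xy)$ and use $x^*x\le\|x\|^2 1$ together with the fact that $z\mapsto y^*zy$ is order preserving to get $y^*x^*xy\le\|x\|^2y^*y$; order preservation and homogeneity then give $\|xy\|_{2,\tau}\le\|x\|\,\|y\|_{2,\tau}$. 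The second estimate is the same computation applied to $\tau(xy(xy)^*)=\tau(xyy^*x^*)$ after using $yy^*\le\|y\|^2 1$ and $\tau(z^*z)=\tau(zz^*)$ with $z=xy$.

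The heart of the matter is (1). Since $\tau$ is \emph{not} additive, I cannot simply split $\tau(a+b)$; instead I would dilate. Setting $w=\bigl(\begin{smallmatrix}a^{1/2}&b^{1/2}\\ 0&0\end{smallmatrix}\bigr)\in M_2(A\otimes K)$ gives $ww^*=(a+b)\oplus 0$ and $w^*w=\bigl(\begin{smallmatrix}a&a^{1/2}b^{1/2}\\ b^{1/2}a^{1/2}&b\end{smallmatrix}\bigr)$, so that $\tau(z^*z)=\tau(zz^*)$ yields the exact identity $\tau(a+b)=\tau(w^*w)$; this moves the off-diagonal ``correlation'' into a matrix that I can dominate. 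The key elementary inequality is that for every $\varepsilon>0$,
\[
w^*w\ \le\ \begin{pmatrix}(1+\varepsilon)a & 0\\ 0 & (1+\varepsilon^{-1})b\end{pmatrix},
\]
because the difference equals $\eta\eta^*\ge 0$ for $\eta=\bigl(\begin{smallmatrix}\varepsilon^{1/2}a^{1/2}&0\\ -\varepsilon^{-1/2}b^{1/2}&0\end{smallmatrix}\bigr)$. Applying order preservation and then additivity and homogeneity on the block-diagonal right-hand side gives $\tau(a+b)\le(1+\varepsilon)\tau(a)+(1+\varepsilon^{-1})\tau(b)$ for all $\varepsilon>0$; minimizing over $\varepsilon$ (at $\varepsilon=(\tau(b)/\tau(a))^{1/2}$) produces exactly $(\tau(a)^{1/2}+\tau(b)^{1/2})^2$, which is (1). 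The degenerate cases in which a value vanishes or is infinite are covered by passing to the infimum over $\varepsilon$.

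For (2) I would first rewrite the claim as $\tau((x+y)^*(x+y))^{1/3}\le\tau(x^*x)^{1/3}+\tau(y^*y)^{1/3}$. The reduction to (1) uses the companion inequality
\[
(x+y)^*(x+y)\ \le\ (1+\varepsilon)\,x^*x+(1+\varepsilon^{-1})\,y^*y,
\]
valid because the difference is $(\varepsilon^{1/2}x-\varepsilon^{-1/2}y)^*(\varepsilon^{1/2}x-\varepsilon^{-1/2}y)\ge 0$. Applying $\tau$ (order preserving) and then part (1) to the two noncommuting summands gives
\[
\|x+y\|_{2,\tau}\ \le\ (1+\varepsilon)^{1/2}\|x\|_{2,\tau}+(1+\varepsilon^{-1})^{1/2}\|y\|_{2,\tau}.
\]
The final step is an optimization: writing $u=\|x\|_{2,\tau}^{2/3}$ and $v=\|y\|_{2,\tau}^{2/3}$ and choosing $\varepsilon=v/u$ collapses the right-hand side to exactly $(u+v)^{3/2}$, which is the assertion after raising to the power $2/3$. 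It is precisely this exponent that makes the optimization sharp, matching the implicit constant $\sqrt{2}$ that a crude parallelogram bound would have produced.

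I would flag part (1) as the main obstacle, and specifically the dilation identity $\tau(a+b)=\tau(w^*w)$ together with its domination by a block-diagonal matrix. This device is what compensates for the failure of additivity and subadditivity of quasitraces: ordinary traces make (1) trivial and (2) an immediate Minkowski inequality, whereas for quasitraces one must manufacture the missing linearity through order preservation on matrix amplifications. Once this step is in place, the optimizations in (1) and (2) are essentially forced, and (3) is routine.
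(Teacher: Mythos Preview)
Your argument is correct; the $2\times 2$ dilation trick and the optimizations you carry out are exactly Haagerup's proof of \cite[Lemma~3.5]{MR3241179}, which the paper also invokes. The only substantive difference is in how part~(1) is handled for general $a,b\in (A\otimes K)_+$. The paper first restricts to $a,b\in M_m(A)_+$ (where it simply cites Haagerup), and then reaches arbitrary $a,b$ by compressing with the finite-rank projections $p_n=\sum_{i=1}^n 1\otimes e_{i,i}$: one bounds $\tau((a+b)^{1/2}p_n(a+b)^{1/2})^{1/2}$ using the matrix case and then lets $n\to\infty$, invoking lower semicontinuity of $\tau$ to recover $\tau(a+b)^{1/2}$. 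You instead run the dilation argument directly in $M_2(A\otimes K)$, relying on the fact that the unique lower semicontinuous extension of $\tau$ to $A\otimes K$ transports, via $M_2(A\otimes K)\cong A\otimes K$, to a lower semicontinuous (hence order-preserving) quasitrace on $M_2(A\otimes K)$ that is additive on orthogonal blocks. Your route is shorter and avoids the approximation step entirely; the paper's route is slightly more cautious in that it never needs to discuss the quasitrace on $M_2(A\otimes K)$ as such, working only with the given extension on $A\otimes K$ and the matrix amplifications of $A$. Both are valid, and for parts~(2) and~(3) the paper simply defers to \cite{MR3241179}, whereas you spell out the same computations in full.
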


\begin{proof}
	(1) Note that if $a,b\in M_m(A)_+$ for some $m\in \mathbb{N}$, then the proof of \cite[Lemma 3.5(1)]{MR3241179} still works in the unbounded case. In general, let $a,b\in (A\otimes K)_+$, and let $\{e_{i,j}\}$ be the standard matrix units for $K$. Set $p_n=\sum_{i=1}^n1_{\mathcal{M}(A)}\otimes e_{i,i}\in \mathcal{M}(A\otimes K)$ for each $n\in \mathbb{N}$. Then $(p_n)_{n=1}^\infty$ is an increasing sequence of projections in $\mathcal{M}(A\otimes K)$ which converges to $1_{\mathcal{M}(A\otimes K)}$ in the strict topology of $\mathcal{M}(A\otimes K)$. Note that $p_nap_n,p_nbp_n\in M_n(A)_+$. So we have
	\[
	\begin{aligned}
		\tau((a+b)^{1/2}p_n(a+b)^{1/2})^{1/2} & =\tau(p_n(a+b)p_n)^{1/2}\\
		& =\tau(p_nap_n+p_nbp_n)^{1/2}\\
		& \leq \tau(p_nap_n)^{1/2}+\tau(p_nbp_n)^{1/2}\\
		& =\tau(a^{1/2}p_na^{1/2})^{1/2}+\tau(b^{1/2}p_nb^{1/2})^{1/2}\\
		& \leq \tau(a)^{1/2}+\tau(b)^{1/2}.
	\end{aligned}
	\]
	Since $\tau$ is lower semicontinuous, we get
	\[
	\tau(a+b)^{1/2}=\lim_{n\rightarrow \infty}\tau((a+b)^{1/2}p_n(a+b)^{1/2})^{1/2}\leq \tau(a)^{1/2}+\tau(b)^{1/2}.
	\]
	
	Since $\tau$ is order-preserving on $(A\otimes K)_+$, the proof of (2) and (3) is the same as that of \cite[Lemma 3.5]{MR3241179}.
\end{proof}

\begin{definition}
	Let $A$ be a C*-algebra and let $\tau\in \widetilde{\mathrm{QT}}(A)$. The induced dimension function $d_\tau\colon (A\otimes K)_+\rightarrow [0,\infty]$ is given by
	\[
	d_\tau(a)=\lim_{n\rightarrow \infty}\tau(a^{1/n}).
	\]
\end{definition}

The following result is well known.

\begin{lemma}\label{lemqtnd}
	Let $A$ be a C*-algebra, let $a\in (A\otimes K)_+$, and let $\tau\in \widetilde{\mathrm{QT}}(A)$. Then
	\[
	\|\tau\vert_{\overline{a(A\otimes K)a}}\|=d_\tau(a).
	\]
\end{lemma}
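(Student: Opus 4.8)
The plan is to prove the two inequalities $d_\tau(a)\le \|\tau\vert_H\|$ and $\|\tau\vert_H\|\le d_\tau(a)$ separately, where $H:=\overline{a(A\otimes K)a}$. Since rescaling $a$ by a positive scalar changes neither the hereditary subalgebra $H$ nor the Cuntz class of $a$ (and $d_\tau$ is a Cuntz invariant), I would first reduce to the case $\|a\|\le 1$.

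For the inequality $d_\tau(a)\le \|\tau\vert_H\|$: the functions $f_n(t)=t^{1/n}$ satisfy $f_n(0)=0$, so $a^{1/n}=f_n(a)$ lies in $H$, and $\|a^{1/n}\|=\|a\|^{1/n}\le 1$; hence $a^{1/n}\in H_+^1$. By the definition of $\|\tau\vert_H\|$ this gives $\tau(a^{1/n})\le \|\tau\vert_H\|$ for every $n$, and letting $n\to\infty$ yields $d_\tau(a)=\lim_n\tau(a^{1/n})\le\|\tau\vert_H\|$.

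For the reverse inequality, fix $x\in H_+^1$; the goal is $\tau(x)\le d_\tau(a)$. Two facts combine. First, since $x\in\overline{a(A\otimes K)a}$, Lemma \ref{lemcuc}(3) gives $x\precsim a$, and the well-known monotonicity of the dimension function under Cuntz subequivalence yields $d_\tau(x)\le d_\tau(a)$. Second, because $\|x\|\le 1$ we have $t^{1/n}\ge t$ on $[0,1]$, so $x^{1/n}\ge x$; as $\tau$ is order-preserving on positive elements (Remark \ref{remqt}(1)), $\tau(x^{1/n})\ge \tau(x)$, and passing to the limit gives $\tau(x)\le d_\tau(x)$. Chaining these, $\tau(x)\le d_\tau(x)\le d_\tau(a)$, and taking the supremum over $x\in H_+^1$ gives $\|\tau\vert_H\|\le d_\tau(a)$.

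The routine points are the membership $a^{1/n}\in H$ (continuous functional calculus for functions vanishing at $0$) and the order-preservation already recorded in Remark \ref{remqt}(1). The one step that needs genuine care—and the main obstacle—is the monotonicity $x\precsim a\Rightarrow d_\tau(x)\le d_\tau(a)$ in the present setting of densely defined, lower semicontinuous $2$-quasitraces: one establishes it by the standard Rørdam device of passing to the cut-downs $(x-\varepsilon)_+$, which are Cuntz subequivalent to $(a-\delta)_+$ for a suitable $\delta$, using $d_\tau(x)=\sup_{\varepsilon>0}d_\tau((x-\varepsilon)_+)$ together with lower semicontinuity of $\tau$. This is where the unboundedness of $\tau$ must be handled, and I would either invoke the corresponding result from the quasitrace literature or reprove it via these cut-downs.
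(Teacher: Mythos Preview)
Your argument is correct, and the easy inequality $d_\tau(a)\le\|\tau\vert_H\|$ is handled exactly as in the paper. For the reverse inequality, however, you take a genuinely different route from the paper. You factor $\tau(x)\le d_\tau(x)\le d_\tau(a)$, the second step relying on the monotonicity of $d_\tau$ under Cuntz subequivalence---which, as you rightly flag, is the non-trivial input and needs its own argument in the unbounded setting. The paper instead gives a direct and self-contained computation: for $b\in H_+^1$, it uses that $(a^{1/n})$ is an approximate identity for $H$, the quasitrace identity $\tau(x^*x)=\tau(xx^*)$ with $x=a^{1/n}b^{1/2}$, and order-preservation to get
\[
\tau(b)=\lim_n\tau(b^{1/2}a^{2/n}b^{1/2})=\lim_n\tau(a^{1/n}ba^{1/n})\le\lim_n\tau(a^{2/n})=d_\tau(a).
\]
This avoids any appeal to Cuntz-comparison machinery. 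Your route is perfectly valid and conceptually natural (it shows the result as a corollary of a general principle about dimension functions), but it outsources the work to that principle; the paper's route is shorter and uses only the defining properties of $\tau$ already recorded in the preliminaries.
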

\begin{proof}
	Since $d_\tau(\lambda a)=d_\tau(a)$ for all $\lambda>0$, we may assume that $\|a\|\leq 1$. Note that $(a^{1/n})_{n=1}^\infty$ is an increasing approximate identity for $\overline{a(A\otimes K)a}$. For any $b\in (\overline{a(A\otimes K)a})_+^1$, using the lower semicontinuity of $\tau$, we get
	\[
	\tau(b)=\lim_{n\rightarrow \infty}\tau(b^{1/2}a^{2/n}b^{1/2})=\lim_{n\rightarrow \infty}\tau(a^{1/n}ba^{1/n})\leq \lim_{n\rightarrow \infty}\tau(a^{2/n})=d_\tau(a).
	\]
	Thus $\|\tau\vert_{\overline{a(A\otimes K)a}}\|\leq d_\tau(a)$. On the other hand, we have $\tau(a^{1/n})\leq \|\tau\vert_{\overline{a(A\otimes K)a}}\|$ for all $n\in \mathbb{N}$. Hence $\|\tau\vert_{\overline{a(A\otimes K)a}}\|\geq d_\tau(a)$.
\end{proof}

\subsection{Property (TM)}
The following definition is from \cite[Definition 8.1]{fu2024tracialoscillationzerostable}.

\begin{definition}\label{deftm}
	Let $A$ be a simple unital C*-algebra with $\mathrm{QT}(A)\neq \emptyset$. Then $A$ is said to have property (TM), if for any $a\in \mathrm{Ped}(A\otimes K)^1_+$, any $n\in \mathbb{N}$, and any $\varepsilon>0$, there exists a c.p.c. order zero map $\varphi\colon M_n\rightarrow \overline{a(A\otimes K)a}$ such that
	\[
	\|a-\varphi(1_n)a\|_{2,\mathrm{QT}(A)}<\varepsilon.
	\]
\end{definition}

\begin{remark}
	In \cite{fu2024tracialoscillationzerostable}, the original definition of property (TM) is defined for $\sigma$-unital simple C*-algebras as follows:
	
	A $\sigma$-unital simple C*-algebra $A$ with $\widetilde{\mathrm{QT}}(A)\setminus\{0\}\neq\emptyset$ is said to have property (TM), if there exists $e\in \mathrm{Ped}(A)_+^1\setminus\{0\}$ such that the following condition holds: For any $a\in \mathrm{Ped}(\overline{eAe}\otimes K)^1_+$, any $n\in \mathbb{N}$, and any $\varepsilon>0$, there exists a c.p.c. order zero map $\varphi\colon M_n\rightarrow \overline{a(\overline{eAe} \otimes K)a}$ such that
	\[
	\|a-\varphi(1_n)a\|_{2,\overline{\mathrm{QT}(\overline{eAe})}^w}<\varepsilon,
	\]
	where $\overline{\mathrm{QT}(\overline{eAe})}^w$ is the closure of $\mathrm{QT}(\overline{eAe})$ in $\widetilde{\mathrm{QT}}(A)$ under the topology of pointwise convergence on $\mathrm{Ped}(A\otimes K)_+$.
	
	First, note that if $A$ and $e$ satisfy the condition in the above definition, then any $e_1\in \mathrm{Ped}(A)_+^1\setminus\{0\}$ would satisfy the same condition. Indeed, given another $e_1\in \mathrm{Ped}(A)_+^1\setminus\{0\}$, by \cite[Proposition 2.10(4)]{fu2024tracialoscillationzerostable} there exists $L$ depending on $e$ and $e_1$ such that
	\[
	\|x\|_{2,\overline{\mathrm{QT}(\overline{e_1Ae_1})}^w}\leq L\|x\|_{2,\overline{\mathrm{QT}(\overline{eAe})}^w}
	\]
	for all $x\in \mathrm{Ped}(A\otimes K)$. By Brown's theorem (\cite[Theorem 2.8]{MR0454645}), there is an isomorphism $\psi\colon \overline{eAe}\otimes K\rightarrow \overline{e_1Ae_1}\otimes K$. This isomorphism is induced by a partial isometry $v$ in $\mathcal{M}(A\otimes K)$ which takes $x$ to $vxv^*$. So it is easily checked that
	\[
	\tau(\psi(x))=\tau(x)
	\]
	for all $\tau\in \widetilde{\mathrm{QT}}(A)$ and all $x\in (\overline{eAe}\otimes K)_+$.
	
	Since $\psi$ is surjective, $\psi(\mathrm{Ped}(\overline{eAe}\otimes K))=\mathrm{Ped}(\overline{e_1Ae_1}\otimes K)$ (see \cite[II.5.2.7]{MR2188261}). Given any $\psi(a)\in \mathrm{Ped}(\overline{e_1Ae_1}\otimes K)^1_+$, any $n\in \mathbb{N}$ and any $\varepsilon>0$, applying the hypothesis with $\varepsilon/L$ in place of $\varepsilon$ and everything else as given, we get a c.p.c. order zero map $\varphi\colon M_n\rightarrow \overline{a(\overline{eAe} \otimes K)a}$ such that
	\[\|a-\varphi(1_n)a\|_{2,\overline{\mathrm{QT}(\overline{eAe})}^w}<\varepsilon/L.
	\]
	Considering the c.p.c. order zero map $\psi\circ\varphi\colon M_n\rightarrow \overline{\psi(a)(\overline{e_1Ae_1} \otimes K)\psi(a)}$, we have
	\begin{align*}
		\|\psi(a)-\psi\circ\varphi(1_n)\psi(a)\|_{2,\overline{\mathrm{QT}(\overline{e_1Ae_1})}^w}
		& =\|a-\varphi(1_n)a\|_{2,\overline{\mathrm{QT}(\overline{e_1Ae_1})}^w}\\
		& \leq L\|a-\varphi(1_n)a\|_{2,\overline{\mathrm{QT}(\overline{eAe})}^w}<\varepsilon.
	\end{align*}
	This shows that the above definition is independent of the choice of $e$.
	
	If $A$ is unital, then $\mathrm{QT}(A)$ is a compact subset of $\widetilde{\mathrm{QT}}(A)$ (see \cite[Theorem 4.4]{MR2823868}). By choosing $e=1_A$, we get Definition \ref{deftm}.
\end{remark}

\section{Approximation property for the crossed product}\label{sec3}

In this section, based on the work of \cite{MR2712082}, we present an approximation property for the crossed products by finite group actions with the weak tracial Roklin property (see Theorem \ref{thmap}). This approximation property is closely related to the notion of essential tracial approximation (see \cite[Definition 3.1]{MR4387777}) and the notion of generalized tracial approximation (see \cite[Definition 1.2]{elliott2023nonunitalgeneralizedtracially}).

\begin{definition}[{\cite[Definition 3.2]{MR4336489}}]\label{defwtr}
	Let $A$ be an infinite-dimensional simple unital C*-algebra, let $G$ be a finite group, and let $\alpha\colon G\rightarrow \mathrm{Aut}(A)$ be an action of $G$ on $A$. Then $\alpha$ is said to have the weak tracial Rokhlin property if for any finite subset $F\subset A$, any $\varepsilon>0$, and any $s\in A_+$ with $\|s\|=1$, there exist orthogonal positive elements $f_g\in A_+^1$ for $g\in G$ such that
	\begin{enumerate}
		\item $\|f_gx-xf_g\|<\varepsilon$ for all $x\in F$ and all $g\in G$.
		\item $\|\alpha_g(f_h)-f_{gh}\|<\varepsilon$ for all $g,h\in G$.
		\item $1-f\precsim_A s$, where $f=\sum_{g\in G}f_g$.
		\item $\|fsf\|>1-\varepsilon$.
	\end{enumerate}
\end{definition}

\begin{remark}\label{remsim}
	Let $A, G, \alpha$ be as above. If $\alpha$ has the weak tracial Rokhlin property, then $\alpha_g$ is outer for all $g\in G\setminus\{1\}$ (see \cite[Proposition 3.2]{MR4216445}). It follows from \cite[Theorem 3.1]{MR0634163} that $A\rtimes_\alpha G$ is simple.
\end{remark}

The following result is a modified version of \cite[Lemma VII.4]{MR2712082}. Note that Condition (1) and (2) of Lemma \ref{lemap} have been obtained in \cite[Lemma VII.4]{MR2712082}. Since the proof is similar to that of \cite[Lemma VII.4]{MR2712082}, we omit some details.

\begin{lemma}[{cf. \cite[Lemma VII.4]{MR2712082}}]\label{lemap}
	Let $A$ be an infinite-dimensional simple unital C*-algebra, let $G$ be a finite group with $\mathrm{card}(G)=n$, and let $\alpha\colon G\rightarrow \mathrm{Aut}(A)$ be an action of $G$ on $A$ which has the weak tracial Rokhlin property. Then for any finite subet $F\subset A\rtimes_\alpha G$, any $\varepsilon>0$, and any $s\in (A\rtimes_\alpha G)_+\setminus \{0\}$, there exist an element $a\in A_+^1$, a C*-subalgebra $B$ of $A\rtimes_\alpha G$ with $B\cong \overline{aAa}\otimes M_n$, and an element $d\in B_+^1$, such that
\begin{enumerate}
	\item\label{lem3-1} $\|dx-xd\|<\varepsilon$ for all $x\in F$.
	\item\label{lem3-2} $dx\in_\varepsilon B$ for all $x\in F$.
	\item\label{lem3-3} $1-d\precsim_{A\rtimes_\alpha G} s$.
	\item\label{lem3-4} $\|d\|=1$.
\end{enumerate}
\end{lemma}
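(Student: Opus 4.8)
The plan is to use the weak tracial Rokhlin property to manufacture a Rokhlin-type system of orthogonal positive elements indexed by $G$, then assemble these into a matrix subalgebra isomorphic to $\overline{aAa}\otimes M_n$ whose positive ``unit'' $d$ both approximately commutes with $F$ and approximately absorbs $F$ into $B$. The key realization is that since we already have Conditions \eqref{lem3-1} and \eqref{lem3-2} from \cite[Lemma VII.4]{MR2712082}, the real work lies in arranging the remaining structural data $(a, B, d)$ and verifying Conditions \eqref{lem3-3} and \eqref{lem3-4}.

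First I would reduce to a convenient generating set: write each element of $F$ in the crossed product as a finite sum $\sum_{g\in G} a_g u_g$ with $a_g\in A$, so that it suffices to control the action on a finite subset $F_0\subset A$ together with the canonical unitaries $\{u_g\}_{g\in G}$. Given $\varepsilon>0$ and $s$, I would fix a strictly positive element $s_0\in A_+$ with $\|s_0\|=1$ dominated appropriately by $s$ (using simplicity of $A$ and of $A\rtimes_\alpha G$ from Remark \ref{remsim}), and apply Definition \ref{defwtr} with $F_0$, a sufficiently small tolerance $\varepsilon'$, and $s_0$ to obtain orthogonal positive contractions $\{f_g\}_{g\in G}$ satisfying (1)--(4) of the weak tracial Rokhlin property. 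Setting $f=\sum_{g\in G} f_g$, the near-invariance $\|\alpha_g(f_h)-f_{gh}\|<\varepsilon'$ lets me define, following \cite{MR2712082}, approximate matrix units $e_{g,h}$ inside $A\rtimes_\alpha G$ by expressions of the form $f_g^{1/2}u_{gh^{-1}}f_h^{1/2}$ (up to the standard normalizations), which up to $\varepsilon'$ behave like a system $\{e_{g,h}\}_{g,h\in G}$ of matrix units for $M_n$ commuting with a copy of $\overline{aAa}$, where $a=f_1$ is the component at the identity. The subalgebra $B$ generated by $\overline{aAa}$ and these matrix units is then isomorphic to $\overline{aAa}\otimes M_n$.

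For the element $d$, the natural choice is $d=\sum_{g\in G} f_g = f$ itself, or a slight functional-calculus modification $f_{\varepsilon'}(f)$ to guarantee it lands in $B$ as the image of the diagonal; then $\|d\|=1$ follows from $\|f_g\|=1$ and orthogonality, giving Condition \eqref{lem3-4}. Condition \eqref{lem3-3}, namely $1-d\precsim_{A\rtimes_\alpha G} s$, is where the comparison in the crossed product enters: from Condition (3) of the weak tracial Rokhlin property I have $1-f\precsim_A s_0$ \emph{in $A$}, and I would promote this to Cuntz subequivalence in $A\rtimes_\alpha G$ using the canonical conditional expectation $E\colon A\rtimes_\alpha G\to A$ together with the fact that $s_0\precsim s$ in the crossed product; here Lemma \ref{lemcuc}, especially parts (2) and (3) on hereditary subalgebras and perturbation, and Lemma \ref{lemcup} to pass between $(f-\sigma)_+$ and genuinely commuting approximants, will do the bookkeeping.

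The main obstacle I anticipate is \textbf{Condition \eqref{lem3-3}}: translating the comparison $1-f\precsim_A s$, which holds in the \emph{subalgebra} $A$, into a comparison in the larger algebra $A\rtimes_\alpha G$, since Cuntz subequivalence is sensitive to the ambient algebra and $1-d$ need not lie in $A$ once $d$ is adjusted to sit inside $B$. I would handle this by carefully choosing $s_0$ so that an inclusion $\overline{(1-f)(A\rtimes_\alpha G)(1-f)}\subset \overline{s(A\rtimes_\alpha G)s}$ (or a suitable cutdown thereof) can be witnessed, exploiting that the Rokhlin elements $f_g$ approximately commute with the unitaries $u_g$ so that conjugation moves $1-f$ into the hereditary subalgebra generated by $s$ with controllable error, and then invoking Lemma \ref{lemcuc}(2) to absorb that error. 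The remaining verifications --- that $B\cong\overline{aAa}\otimes M_n$ and that $d$ approximately commutes with and absorbs $F$ --- are essentially the content already recorded in \cite[Lemma VII.4]{MR2712082} and require only routine estimates combining Lemma \ref{approx} with the near-matrix-unit relations.
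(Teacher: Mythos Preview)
Your outline follows essentially the same architecture as the paper (and \cite{MR2712082}), but two of your technical claims miss the mark and would need repair.

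\textbf{Condition \eqref{lem3-4}.} You assert that $\|d\|=1$ ``follows from $\|f_g\|=1$ and orthogonality,'' but Definition~\ref{defwtr} never guarantees $\|f_g\|=1$; the Rokhlin elements are only required to lie in $A_+^1$. What the definition does give is $\|fs_1f\|>1-\varepsilon'$, hence $\|f\|>1-\varepsilon'$, which is close to but not equal to $1$. The paper fixes this by composing with a continuous function $l_1\colon[0,1]\to[0,1]$ that is identically $1$ near the top of the spectrum, so that $d=l_1(d_1)$ has norm exactly $1$ once one knows the spectrum of $d_1$ reaches past $1-3\delta_2$. Your parenthetical ``slight functional-calculus modification'' is exactly what is needed, but the reasoning you offer for it is not.

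\textbf{Condition \eqref{lem3-3}.} You flag the passage from $1-f\precsim_A s_0$ to Cuntz subequivalence in $A\rtimes_\alpha G$ as the main obstacle, but this step is automatic: witnesses for Cuntz subequivalence in $A$ are already witnesses in any larger algebra. The two genuinely nontrivial points are elsewhere. First, one must produce $s_1\in A_+\setminus\{0\}$ with $s_1\precsim_{A\rtimes_\alpha G} s$ from an arbitrary $s\in(A\rtimes_\alpha G)_+\setminus\{0\}$; this is where outerness of $\alpha$ is used (the paper invokes \cite[Lemma~5.1]{MR3063095}), and your appeal to ``the canonical conditional expectation'' does not by itself do this. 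Second, once $d$ is a functional-calculus perturbation of $f$ (as it must be, by the previous paragraph), $1-d$ is no longer $1-f$, and one needs the chain $1-d\sim(1-d_1-3\delta_2)_+\precsim 1-l_2(f)\precsim 1-f\precsim s_1$ via Lemma~\ref{lemcuc}(1),(2). Your discussion of hereditary-subalgebra inclusions and conjugation by $u_g$ is not the mechanism actually at work here.

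With these corrections the argument goes through along the lines you sketched; the exact construction of $B$ and the verification of \eqref{lem3-1}--\eqref{lem3-2} are indeed carried by \cite[Lemma~VII.4]{MR2712082}, with the paper using the order zero map formalism of \cite[Lemma~VII.3]{MR2712082} (rather than raw approximate matrix units) to get an honest $*$-homomorphism $\psi$ with image $B$.
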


\begin{proof}
	Let $F\subset A\rtimes_\alpha G $ be a finite subset, let $0<\varepsilon<1$, and let $s\in (A\rtimes_\alpha G)_+\setminus \{0\}$. For $g\in G$, let $u_g\in A\rtimes_\alpha G$ be the canonical unitary implementing the automorphism $\alpha_g$. Without loss of generality, we may assume that $F=\{xu_g\colon x\in F_1, g\in G\}$, where $F_1$ is a finite subset of $A^1$.
	
	Since $\alpha$ has the weak tracial Rokhlin property, $\alpha_g$ is outer for all $g\in G\setminus \{1\}$. By \cite[Lemma 5.1]{MR3063095}, there exists an element $s_1\in A_+\setminus \{0\}$ such that $s_1\precsim_{A\rtimes_\alpha G} s$. We may assumed that $\|s_1\|=1$.
	
	By Lemma \ref{approx}(3), we can choose $\delta_1$ with $0<\delta_1<\varepsilon$ such that whenever $D\subset A\rtimes_\alpha G$ is a subalgebra, and $y\in (A\rtimes_\alpha G)^1$ and $z\in D_+^1$ satisfy $zy\in_{\delta_1} D$, then $z^{1/3}y\in_{\varepsilon/3} D$.
	
	Choose $\delta_2$ such that $0<\delta_2<\delta_1/(n^2+17n+5)$. Define continuous funcitons $l_1,l_2\colon [0,1]\rightarrow [0,1]$ by
	\[
	l_1(t)=
	\begin{cases}
		(1-3\delta_2)^{-1}t & 0\leq t< 1-3\delta_2,\\
		1 & 1-3\delta_2\leq t \leq1,
	\end{cases}
	\]
	and
	\[
	l_2(t)=
	\begin{cases}
		0 & 0\leq t< 1-3\delta_2,\\
		\text{linear} & 1-3\delta_2\leq t< 1-2\delta_2,\\
		1 & 1-2\delta_2\leq t\leq 1.
	\end{cases}
	\]
    
    By Lemma \ref{approx}(1) and Lemma \ref{approx}(2), we can choose $\delta_3$ with $0<\delta_3<\delta_2$ such that if $y\in (A\rtimes_\alpha G)_+^1$, $z\in (A\rtimes_\alpha G)^1$ satisfy $\|yz-zy\|<\delta_3$, then $\|l_2(y)z-zl_2(y)\|<\delta_2$, and if $y,z\in (A\rtimes_\alpha G)_+^1$ satisfy $\|y-z\|<\delta_3$, then $\|l_2(y)-l_2(z)\|<\delta_2$.
    
    By choosing a bijection from $G$ to $\{1,2,\cdots,n\}$ which sends the identity of $G$ to 1, we identify $L(l^2(G))$ with $M_n$. Let $\{e_{g,h}\}_{g,h\in G}$ be a system of matrix units for $M_n$. Applying \cite[Lemma VII.3]{MR2712082} with $F_1$ in place of $F$, with $\delta_3/n$ in place of $\varepsilon$, and with $s_1$ in place of $x$, we get $\delta_4$ with $0<\delta_4<\delta_3$, orthogonal positive elements $f_g\in A_+^1$ for each $g\in G$, and a c.p.c. order zero map $\varphi\colon M_n\rightarrow A\rtimes_\alpha G$ such that
    \begin{enumerate}
    	\setcounter{enumi}{4}
    	\item\label{lem3-5} $\|\varphi(e_{g,g})x-x\varphi(e_{g,g})\|<\delta_3/n$ for all $x\in F_1$ and all $g\in G$.
    	\item\label{lem3-6} $\|u_g\varphi(e_{h,k})-\varphi(e_{gh,k})\|<\delta_3/n$ for all $g,h,k\in G$.
    	\item\label{lem3-7} $\varphi(e_{1,1})\in A_+^1\setminus\{0\}$, where 1 is the unit of $G$.
    	\item\label{lem3-8} $\|\varphi(e_{g,g})-f_g\|<\delta_3/n$ for all $g\in G$.
    	\item\label{lem3-9} $1-f\precsim_A s_1$, where $f=\sum_{g\in G}f_g$.
    	\item\label{lem3-10} $\|fs_1f\|>1-\delta_4$.
    \end{enumerate}
    Using the functional calculus for order zero maps, define c.p.c. order zero maps $\varphi_1,\varphi_2\colon M_n\rightarrow A\rtimes_\alpha G$ by $\varphi_1=l_1(\varphi)$ and $\varphi_2=l_2(\varphi)$. Using Theorem \ref{thmozs}, it is easily checked that
    \begin{equation}\label{eq31}
    	\varphi_1(e_{g_1,h_1})\varphi_2(e_{g_2,h_2})=\delta_{h_1,g_2}\varphi_2(e_{g_1,h_2})=\varphi_2(e_{g_1,h_1})\varphi_1(e_{g_2,h_2})
    \end{equation}
    for all $g_1,g_2,h_1,h_2\in G$.
    
    Using the fact that $\varphi$ is a c.p.c. order zero map at the first step, (\ref{lem3-8}) at the second step and (\ref{lem3-10}) at the fourth step, we get
    \begin{equation}\label{eq32}
    	\|\varphi(e_{1,1})\|=\|\varphi(1_n)\|>\|f\|-\delta_3\geq \|fs_1f\|-\delta_3>1-2\delta_2.
    \end{equation}
    Thus by Lemma \ref{lemcpf} and (\ref{lem3-7}), we have
    \[
    \varphi_2(e_{1,1})=l_2(\varphi(e_{1,1}))\in A_+^1\setminus\{0\}.
    \]
    Define a function $\psi\colon \overline{\varphi_2(e_{1,1})A\varphi_2(e_{1,1})}\otimes M_n\rightarrow A\rtimes_\alpha G$ by
    \[
    \psi(\sum_{g,h\in G}x_{g,h}\otimes e_{g,h})=\sum_{g,h\in G}\varphi_1(e_{g,1})x_{g,h}\varphi_1(e_{1,h})
    \]
    for all $x_{g,h}\in \overline{\varphi_2(e_{1,1})A\varphi_2(e_{1,1})}$ and all $g,h\in G$. Using the same argument as in \cite[Lemma VII.4]{MR2712082}, we conclude that  $\psi$ is a $\ast$-homomorphism.
    
    Set $B=\mathrm{Im}(\psi)\subset A\rtimes_\alpha G$, $d_1=\psi(\varphi_2(e_{1,1})\otimes 1_n)$ and $d=l_1(d_1)$. Since $A$ is simple, it follows that $\overline{\varphi_2(e_{1,1})A\varphi_2(e_{1,1})}\otimes M_n$ is simple. Therefore, $\psi$ is injective and $B\cong \overline{\varphi_2(e_{1,1})A\varphi_2(e_{1,1})}\otimes M_n$.
    Using (\ref{eq31}) at the second step, Lemma \ref{lemcpf} at the fourth step, we get
    \[
    d_1=\sum_{g\in G}\psi(\varphi_2(e_{1,1})\otimes e_{g,g})=\sum_{g\in G}\varphi_2(e_{g,g})=\varphi_2(1_n)=l_2(\varphi(1_n)).
    \]
    By (\ref{eq32}), we have $\|d_1\|=1$. Thus $\|d\|$=1. This proves (\ref{lem3-4}).
    
    Let $x\in F_1$ and $g\in G$. By (\ref{lem3-5}) and (\ref{lem3-6}), we have
    \[
    \|\varphi(1_n)x-x\varphi(1_n)\|\leq \sum_{h\in G}\|\varphi(e_{h,h})x-x\varphi(e_{h,h})\|<\delta_3,
    \]
    \[
    \|u_g\varphi(1_n)u_g^*-\varphi(1_n)\|\leq \sum_{h\in G}\|u_g\varphi(e_{h,h})u_g^*-\varphi(e_{gh,gh})\|<\delta_3.
    \]
    Thus by the choice of $\delta_3$, we get
    \begin{equation}\label{eq33}
    	\|\varphi_2(e_{g,g})x-x\varphi_2(e_{g,g})\|=\|l_2(\varphi(e_{g,g}))x-xl_2(\varphi(e_{g,g}))\|<\delta_2,
    \end{equation}
    \begin{equation*}
    	\|d_1x-xd_1\|=\|l_2(\varphi(1_n))x-xl_2(\varphi(1_n))\|<\delta_2,
    \end{equation*}
    \begin{align*}
    \|u_gd_1u^*_g-d_1\| & =\|u_gl_2(\varphi(1_n))u_g^*-l_2(\varphi(1_n))\|\\
    & =\|l_2(u_g\varphi(1_n)u_g^*)-l_2(\varphi(1_n))\|<\delta_2.
    \end{align*}
    Note that $\|d-d_1\|\leq 3\delta_2<\varepsilon/3$. Therefore, we have
    \begin{align*}
        \|dxu_g-xu_gd\| \leq & \|dxu_g-d_1xu_g\|+\|d_1xu_g-xd_1u_g\|\\
        & +\|xd_1u_g-xu_gd_1\|+\|xu_gd_1-xu_gd\|<2\varepsilon/3+2\delta_2<\varepsilon.
    \end{align*}
    This proves (\ref{lem3-1}).
    
    Note that
    \[
    \|\varphi_1(e_{g,h})-\varphi(e_{g,h})\|\leq \|l_1(\varphi(1_n))-\varphi(1_n)\|\leq 3\delta_2
    \]
    for all $g,h\in G$. Using (\ref{eq31}) at the first step and (\ref{lem3-6}) at the last step, we get
    \begin{align}\label{eq34}
    	\|u_g\varphi_2(e_{h_1,h_2})-\varphi_2(e_{gh_1,h_2})\|
    	= & \|u_g\varphi_1(e_{h_1,h_1})\varphi_2(e_{h_1,h_2})-\varphi_1(e_{gh_1,h_1})\varphi_2(e_{h_1,h_2})\| \notag\\
    	\leq & \|u_g\varphi_1(e_{h_1,h_1})-\varphi_1(e_{gh_1,h_1})\| \notag\\
        \leq &  \|u_g\varphi_1(e_{h_1,h_1})-u_g\varphi(e_{h_1,h_1})\|+\|u_g\varphi(e_{h_1,h_1})-\varphi(e_{gh_1,h_1})\| \notag\\
    	& +\|\varphi(e_{gh_1,h_1})-\varphi_1(e_{gh_1,h_1})\|< 6\delta_2+\delta_2/n
    \end{align}
    for all $g,h_1,h_2\in G$.
    
    Let $x\in F_1$. Using (\ref{eq33}) at the third step, (\ref{eq34}) at the fourth step, we have
    \begin{align*}
    	& \|d_1xd_1-\sum_{g\in G}\psi((\varphi_2(e_{1,1})\alpha_{g^{-1}}(x)\varphi_2(e_{1,1}))\otimes e_{g,g})\|\\
    	\leq & \|d_1xd_1-\sum_{g\in G}\varphi_2(e_{g,g})x\varphi_2(e_{g,g})\|+\sum_{g\in G}\|\varphi_2(e_{g,g})x\varphi_2(e_{g,g})-\psi((\varphi_2(e_{1,1})\alpha_{g^{-1}}(x)\varphi_2(e_{1,1}))\otimes e_{g,g})\|\\
    	\leq & \sum_{g,h\in G, g\neq h}\|\varphi_2(e_{g,g})x\varphi_2(e_{h,h})\|+\sum_{g\in G}\|\varphi_2(e_{g,g})x\varphi_2(e_{g,g})-\varphi_2(e_{g,1})\alpha_{g^{-1}}(x)\varphi_2(e_{1,g})\|\\
    	\leq & n(n-1)\delta_2 +\sum_{g\in G}\|\varphi_2(e_{g,g})x\varphi_2(e_{g,g})-\varphi_2(e_{g,1})u_g^*xu_g\varphi_2(e_{1,g})\|\\
    	< & (n^2+11n+2)\delta_2.
    \end{align*}
    Let $g\in G$. Using (\ref{eq31}) at the first step, (\ref{eq34}) at the second step, we have
    \begin{align*}
    \|u_gd_1-\sum_{h\in G}\psi(\varphi_2(e_{1,1})\otimes e_{gh,h})\| & \leq \sum_{h\in G}\|u_g\varphi_2(e_{h,h})-\varphi_2(e_{gh,h})\|<(6n+1)\delta_2.
    \end{align*}
    Thus
    \[
    d_1^3xu_g\approx_{2\delta_2}(d_1xd_1)(u_gd_1)\in_{(n^2+17n+3)\delta_2} B.
    \]
    Since $2\delta_2+(n^2+17n+3)\delta_2<\delta_1$, by the choice of $\delta_1$ we get
    \[
    dxu_g\approx_{\varepsilon/3}d_1xu_g\in_{\varepsilon/3} B.
    \]
    This proves (\ref{lem3-2}).
    
    Define a continuous function $l_3\colon [0,1]\rightarrow [0,1]$ by $l_3(t)=1-l_2(1-t)$. By (\ref{lem3-8}) and the choice of $\delta_3$, we have
    \[
    \|1-d_1-(1-l_2(f))\|=\|l_2(\varphi(1_n))-l_2(f)\|<\delta_2.
    \]
    Using Lemma \ref{lemcuc}(2) at the fourth step, Lemma \ref{lemcuc}(1) at the sixth step, and (\ref{lem3-9}) at the seventh step, we get
    \begin{align*}
    1-d & =1-l_1(d_1)=\frac{1}{1-3\delta_2}(1-d_1-3\delta_2)_+\sim (1-d_1-3\delta_2)_+\\
    & \precsim 1-l_2(f)=l_3(1-f)\precsim 1-f \precsim s_1\precsim s.
    \end{align*}
    This proves (\ref{lem3-3}).
\end{proof}

Condition (\ref{lem3-4}) of Lemma \ref{lemap} can be replaced by a seemingly stronger equivalent condition.

\begin{theorem}\label{thmap}
	Let $A$ be an infinite-dimensional simple unital C*-algebra, let $G$ be a finite group with card$(G)=n$, and let $\alpha\colon G\rightarrow \mathrm{Aut}(A)$ be an action of $G$ on $A$ which has the weak tracial Rokhlin property. Then for any finite subet $F\subset A\rtimes_\alpha G$, any $\varepsilon>0$, any $s\in (A\rtimes_\alpha G)_+\setminus \{0\}$, there exist an element $a\in A_+^1$, a C*-subalgebra $B$ of $A\rtimes_\alpha G$ with $B\cong \overline{aAa}\otimes M_n$, and an element $d\in B_+^1$, such that
	\begin{enumerate}
		\item $\|dx-xd\|<\varepsilon$ for all $x\in F$.
		\item $dx\in_\varepsilon B$ for all $x\in F$.
		\item $1-d\precsim_{A\rtimes_\alpha G} s$.
		\item $\|dxd\|>\|x\|-\varepsilon$ for all $x\in F$.
	\end{enumerate}
\end{theorem}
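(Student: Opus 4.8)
The plan is to reuse the construction of Lemma \ref{lemap} essentially verbatim: conditions (1)--(3) and the subalgebra data $a, B\cong\overline{aAa}\otimes M_n, d$ are literally the same, so I would reproduce them as in the proof above and concentrate entirely on upgrading $\|d\|=1$ to $\|dxd\|>\|x\|-\varepsilon$. First I would make the standard reductions. We may assume $F\subseteq(A\rtimes_\alpha G)^1$, and any $x$ with $\|x\|<\varepsilon$ needs no attention since $\|dxd\|\ge0>\|x\|-\varepsilon$. Using condition (1) and $d$ nearly commuting with $x$ one has $\|dxd\|^2\approx\|d^4x^*x\|$, so it is enough to preserve the norm of the positive element $y=x^*x$. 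The engine of the argument is the observation that in the construction $d=l_1(d_1)$ with $d_1=l_2(\varphi(1_n))$ and $\varphi(1_n)\approx f=\sum_{g}f_g$, so that $d$ equals $1$ on the spectral region where $f$ exceeds $1-2\delta_2$; since $d$ nearly commutes with $y$, the inequality $\|dyd\|>\|y\|-\varepsilon$ follows as soon as $f$ is forced to be close to $1$ on a vector at which $y$ nearly attains its norm.

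The key step is thus to arrange that the tower element $f$ is close to $1$ on the norming part of each element of $F$. To do this I would strengthen the invocation of the weak tracial Rokhlin property (through \cite[Lemma VII.3]{MR2712082}), choosing its auxiliary positive element to detect these norming parts rather than an arbitrary norm-one $s_1$. Because that auxiliary element must lie in $A$ while the relevant $y$ lies in $A\rtimes_\alpha G$, I would first push the data down to $A$: under the regular-representation inclusion $A\rtimes_\alpha G\subseteq M_n(A)$ one has $\|z\|\le n\|E(z)\|$ for $z\in(A\rtimes_\alpha G)_+$, where $E$ is the canonical conditional expectation, so the top spectral part of $E(y)\in A_+$ controls that of $y$. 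The conceptual mechanism is a \emph{transfer trick}: taking the auxiliary element to be a normalization of the top spectral part of the relevant positive element of $A$ makes condition (4) of Definition \ref{defwtr}, namely $\|f s f\|>1-\delta$, force $f\approx1$ precisely where $y$ is maximal, while condition (3) keeps $1-f$ Cuntz-small. One then propagates from $f$ to $d$ through the order-zero functional calculus $l_1,l_2$ exactly as in the proof of Lemma \ref{lemap} (using Lemma \ref{lemcpf}, the structure theorem \ref{thmozs}, and the approximation estimates of Lemma \ref{approx}), and rechecks that (1)--(3) survive for the enlarged finite set and sharpened tolerances, which is routine.

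The step I expect to be the main obstacle is the passage from one element to the whole finite set $F$. Condition (4) of the weak tracial Rokhlin property, $\|fsf\|>1-\varepsilon$, is a single-element and essentially single-``direction'' statement, and it is coupled through the same $s$ to condition (3), $1-f\precsim s$. Preserving the norms of several elements of $F$ simultaneously with one cutdown $d$ therefore requires forcing $f\approx1$ on several norming vectors at once while still keeping $1-d\precsim$ the given (possibly small) target $s$. Decoupling these two demands --- for instance by combining the finitely many norming data into a single auxiliary positive element, or by an $M_n$/matrix-amplification that merges the several norming conditions into one --- and verifying that Cuntz-smallness of $1-f$ is not destroyed in the process, is the delicate point; once it is handled, the rest is a rerun of the proof of Lemma \ref{lemap}.
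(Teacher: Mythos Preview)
Your approach is genuinely different from the paper's, and the obstacle you flag at the end is real and, as stated, unresolved. The paper sidesteps all of it by an asymptotic argument rather than by re-engineering a single application of the weak tracial Rokhlin property. Concretely: assuming $1\in F$, one passes (via Blackadar's separable-subalgebra trick) to a separable simple unital $C\subseteq A\rtimes_\alpha G$ containing $F$, runs Lemma~\ref{lemap} along a dense sequence in $C$ to obtain $(B_k,d_k)$ with $\|d_k\|=1$, and forms the c.p.c.\ map $\varphi\colon C\to l^\infty(A\rtimes_\alpha G)/c_0(A\rtimes_\alpha G)$, $\varphi(x)=\pi((d_kxd_k)_k)$. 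Asymptotic commutation makes $\varphi$ order zero, $\|d_k\|=1$ gives $\|\varphi\|=1$, and simplicity of $C$ together with \cite[Proposition~5.3]{MR4464578} forces $\|\varphi(x)\|=\|x\|$ for every $x\in C$. Hence $\limsup_k\|d_kxd_k\|=\|x\|$ for all $x\in F$ simultaneously, and one just picks a single large index. Note that only condition~(4) of Lemma~\ref{lemap} (namely $\|d\|=1$) is used to obtain the new condition~(4); no finer control of the Rokhlin tower is needed.

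By contrast, your plan tries to force $f\approx 1$ on prescribed norming data by choosing the auxiliary element $s$ in Definition~\ref{defwtr} cleverly. Two things go wrong beyond the finite-set issue you already identify. First, in Definition~\ref{defwtr} the \emph{same} $s$ governs both $1-f\precsim s$ and $\|fsf\|>1-\varepsilon$; you cannot take $s$ to be a top spectral piece of $E(y)$ (which may be Cuntz-large) and still conclude $1-d\precsim$ the given small target. You would need $s$ to be Cuntz-small \emph{and} supported in the norming region, and then $\|fsf\|>1-\varepsilon$ only says $f$ is close to $1$ \emph{somewhere} in that region, not on any particular norming vector. Second, even granting that, the passage from $E(y)$ back to $y$ is not as tight as you suggest: a vector on which $E(y)$ is near $\|E(y)\|$ need not be one on which $y$ is near $\|y\|$, so ``$f\approx1$ where $E(y)$ is maximal'' does not obviously yield $\|dyd\|\approx\|y\|$. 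The paper's route avoids both problems entirely by trading the hands-on spectral argument for the structural fact that nonzero order zero maps out of simple C*-algebras are scalar multiples of isometries.
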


\begin{proof}
	Let $F\subset A\rtimes_\alpha G$ be a finite subset, let $\varepsilon>0$, and let $s\in (A\rtimes_\alpha G)_+\setminus \{0\}$. Without loss of generality, we may assume that $1\in F$. Since $A\rtimes_\alpha G$ is simple, by \cite[Proposition 2.2]{MR0511256} there is a separable simple unital C*-subalgebra $C$ of $A\rtimes_\alpha G$ which contains $F$.
	
	Fix a dense sequence $(c_k)_{k=1}^\infty$ in $C$. For each $k\in \mathbb{N}$, applying Lemma \ref{lemap} with $S_k=\{c_1,c_2,\cdots,c_k\}$ in place of $F$, $1/k$ in place of $\varepsilon$, and $s$ as given, we get a sequence of C*-subalgebras $B_k \subset A\rtimes_\alpha G$ with $B_k\cong \overline{a_kAa_k}\otimes M_n$ for some $a_k\in A_+$, and a sequence of positive elements $d_k\in B_k$ such that
	\begin{enumerate}
		\setcounter{enumi}{4}
		\item\label{thm35} $\|d_kx-xd_k\|<1/k$ for all $x\in S_k$.
		\item\label{thm36} $d_kx\in_{1/k} B_k$ for all $x\in S_k$.
		\item\label{thm37} $1-d_k\precsim_{A\rtimes_\alpha G} s$.
		\item\label{thm38} $\|d_k\|=1$.
	\end{enumerate}
	
	Let $\pi\colon l^\infty(A\rtimes_\alpha G)\rightarrow l^\infty(A\rtimes_\alpha G)/c_0(A\rtimes_\alpha G)$ be the quotient map. Define a c.p.c. map $\varphi\colon C\rightarrow l^\infty(A\rtimes_\alpha G)/c_0(A\rtimes_\alpha G)$ by
	\[
	\varphi(x)=\pi(\{d_1xd_1, d_2xd_2, \cdots\})
	\]
	for all $x\in C$. By (\ref{thm35}), (\ref{thm36}), and the density of $(c_k)_{k=1}^\infty$ in $C$, we have
	\begin{enumerate}
		\setcounter{enumi}{8}
		\item\label{thm39} $\lim\limits_{k\rightarrow \infty}\|d_kx-xd_k\|=0$ for all $x\in C$.
		\item\label{thm310} $\lim\limits_{k\rightarrow \infty}\mathrm{dist}(d_kx, B_k)=0$ for all $x\in C$.
	\end{enumerate}
	By (\ref{thm38}), we have
	\[
	\|\varphi\|=\|\varphi(1)\|=\limsup_{k\rightarrow \infty}\|d_k^2\|=1.
	\]
	Let $x,y\in C_+^1$ satisfy $xy=0$. Then we have
	\begin{align*}
	\|\varphi(x)\varphi(y)\| & =\limsup_{k\rightarrow \infty}\|d_kxd_k^2yd_k\| \\
	& \leq \limsup_{k\rightarrow \infty}\|xd_k^2-d_k^2x\|+\limsup_{k\rightarrow \infty}\|d_k^3xyd_k\|=0.
	\end{align*}
	Therefore, $\varphi$ is a c.p.c. order zero map. Since $C$ is simple, it follows from \cite[Proposition 5.3]{MR4464578} that
	\[
	\|\varphi(x)\|=\|\varphi\|\|x\|=\|x\|
	\]
	for all $x\in C$. Hence there exists a subsequence $(k_l)_{l=1}^\infty$ such that
	\begin{enumerate}
	\setcounter{enumi}{10}
	\item\label{thm311} $\|d_{k_l}xd_{k_l}\|>\|x\|-\varepsilon$ for all $x\in F$ and $l\geq 1$.
    \end{enumerate}
    
    Set $B=B_{k_l}$ and $d=d_{k_l}$ for some sufficiently large $l$. Then the conclusion of the theorem follows from (\ref{thm39}), (\ref{thm310}), (\ref{thm37}), and (\ref{thm311}).
\end{proof}

The following lemma will be used in the proof of the main theorem. It says that the approximation structure in Theorem \ref{thmap} is preserved under tensoring with matrix algebras.
\begin{lemma}\label{lemapm}
	Let $A$ be an infinite-dimensional simple unital C*-algebra, let $G$ be a finite group with card$(G)=n$, and let $\alpha\colon G\rightarrow \mathrm{Aut}(A)$ be an action of $G$ on $A$ which has the weak tracial Rokhlin property. Let $m\in \mathbb{N}$. Then for any finite subet $F\subset M_m(A\rtimes_\alpha G)$, any $\varepsilon>0$, any $s\in (M_m(A\rtimes_\alpha G))_+\setminus \{0\}$, there exist an element $a\in A_+^1$, a C*-subalgebra $B$ of $M_m(A\rtimes_\alpha G)$ with $B\cong \overline{aAa}\otimes M_{nm}$, and an element $d\in B_+^1$, such that
	\begin{enumerate}
		\item\label{lem3.51} $\|dx-xd\|<\varepsilon$ for all $x\in F$.
		\item\label{lem3.52} $dx\in_\varepsilon B$ for all $x\in F$.
		\item\label{lem3.53} $1_{M_m(A\rtimes_\alpha G)}-d\precsim_{M_m(A\rtimes_\alpha G)} s$.
		\item\label{lem3.54} $\|dxd\|>\|x\|-\varepsilon$ for all $x\in F$.
	\end{enumerate}
\end{lemma}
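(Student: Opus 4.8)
The plan is to transport the structure produced by Theorem \ref{thmap} (equivalently Lemma \ref{lemap}) for $A\rtimes_\alpha G$ up to the matrix algebra by tensoring with $M_m$, using the identification $M_m\otimes(\overline{aAa}\otimes M_n)\cong\overline{aAa}\otimes M_{nm}$. Write $D=A\rtimes_\alpha G$ and $E=M_m(D)=M_m\otimes D$, which is again simple and unital. Given $x\in F\subseteq E$, expand $x=\sum_{i,j=1}^m e_{i,j}\otimes x_{i,j}$ with $x_{i,j}\in D$, and let $F_0=\{x_{i,j}:x\in F,\ 1\le i,j\le m\}$ be the finite set of all matrix entries. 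If $d_0\in D_+^1$ approximately commutes with, and approximately lands in, a subalgebra $B_0\subseteq D$ on $F_0$, then $d=1_{M_m}\otimes d_0$ and $B=M_m\otimes B_0$ inherit the corresponding properties on $F$ entrywise, with the tolerance inflated by a factor controlled by $m$ (since $\|\sum_{i,j}e_{i,j}\otimes c_{i,j}\|\le\sum_{i,j}\|c_{i,j}\|$). Thus the routine conditions (\ref{lem3.51}) and (\ref{lem3.52}) reduce to applying Theorem \ref{thmap} to $D$ with $F_0$ and a tolerance of the form $\varepsilon/m^2$.

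The comparison condition (\ref{lem3.53}) requires first choosing the right test element for $D$. I would show that there is $s_0\in D_+\setminus\{0\}$ with $1_{M_m}\otimes s_0\precsim_E s$, as follows. Since $E$ is simple and infinite-dimensional, the hereditary subalgebra $\overline{sEs}$ contains $m$ mutually orthogonal, mutually Cuntz-equivalent nonzero positive elements $t_1,\dots,t_m$; by Lemma \ref{lemcuc}(3),(4) their sum satisfies $t_1^{\oplus m}\sim t_1\oplus\cdots\oplus t_m\precsim_E s$. Next, identifying $D$ with the corner $(e_{1,1}\otimes 1_D)E(e_{1,1}\otimes 1_D)$ and using simplicity of $E$, there is a nonzero positive $s_0\in D$ with $e_{1,1}\otimes s_0\precsim_E t_1$ (a nonzero positive element of a prescribed nonzero hereditary subalgebra can be taken Cuntz-below any given nonzero positive element). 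Then $1_{M_m}\otimes s_0=\sum_{i=1}^m e_{i,i}\otimes s_0\sim (e_{1,1}\otimes s_0)^{\oplus m}\precsim_E t_1^{\oplus m}\precsim_E s$ by Lemma \ref{lemcuc}(4),(5). Feeding this $s_0$ into Theorem \ref{thmap} yields $1_D-d_0\precsim_D s_0$, whence $1_E-d=1_{M_m}\otimes(1_D-d_0)\sim(1_D-d_0)^{\oplus m}\precsim_E s_0^{\oplus m}=1_{M_m}\otimes s_0\precsim_E s$, giving (\ref{lem3.53}).

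The fourth condition is the one that does not survive naive tensoring: from $\|d_0 y d_0\|>\|y\|-\varepsilon'$ on entries $y\in F_0$ one cannot directly conclude $\|dxd\|>\|x\|-\varepsilon$ for the matrix $x$, since $d=1_{M_m}\otimes d_0$ is not norm-close to $1_E$ and the matrix norm is not controlled entrywise in this direction. I would therefore first establish, by the above tensoring, the weaker matrix analogue of Lemma \ref{lemap} in which (\ref{lem3.54}) is replaced by $\|d\|=1$, and then upgrade it to (\ref{lem3.54}) by repeating the sequence-algebra argument of Theorem \ref{thmap} at the level of $E$: using \cite[Proposition 2.2]{MR0511256} embed $F$ in a separable simple unital C*-subalgebra $C\subseteq E$, apply the weak version along a dense sequence of $C$ with tolerances $1/k$ and the fixed $s$ to obtain $d_k=1_{M_m}\otimes d_0^{(k)}$ and $B_k\cong\overline{a_k A a_k}\otimes M_{nm}$ with $\|d_k\|=1$, form the c.p.c. order zero map $\varphi(x)=\pi(\{d_k x d_k\}_k)$ into $l^\infty(E)/c_0(E)$, note $\|\varphi\|=\limsup_k\|d_k^2\|=1$, and invoke \cite[Proposition 5.3]{MR4464578} together with the simplicity of $C$ to get $\|\varphi(x)\|=\|x\|$ for all $x\in C$. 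Passing to a suitable subsequence produces $d=d_{k_l}$ and $B=B_{k_l}$ satisfying all four conditions. The main obstacle is precisely this last step: condition (\ref{lem3.54}) must be recovered through the order-zero/sequence-algebra machinery rather than read off from the tensor construction, while condition (\ref{lem3.53}) hinges on producing a single $s_0\in D$ whose $m$-fold diagonal amplification is Cuntz-dominated by $s$.
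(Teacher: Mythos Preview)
Your proposal is correct and essentially identical to the paper's proof: both tensor the output of Lemma \ref{lemap} (applied to the set of matrix entries with tolerance $\varepsilon/m^2$) by $1_{M_m}$ to obtain $d$ and $B$ satisfying (\ref{lem3.51})--(\ref{lem3.53}) and $\|d\|=1$, and then recover (\ref{lem3.54}) by rerunning the sequence-algebra/order-zero argument of Theorem \ref{thmap} at the level of $M_m(A\rtimes_\alpha G)$. The only cosmetic difference is in producing the comparison element: the paper first finds $t\in (A\rtimes_\alpha G)_+\setminus\{0\}$ with $t\precsim s$ and then splits $t$ into $m$ orthogonal Cuntz-equivalent pieces inside $A\rtimes_\alpha G$, whereas you split $s$ inside $M_m(A\rtimes_\alpha G)$ first and pull one piece back to the corner.
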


\begin{proof}
	Let $F=\{f^{(1)}, f^{(2)}, \cdots, f^{(l)}\}$ be a finite subset of $M_m(A\rtimes_\alpha G)$, let $\varepsilon>0$, and let $s\in (M_m(A\rtimes_\alpha G))_+\setminus \{0\}$. Choose $t\in (A\rtimes_\alpha G)_+\setminus\{0\}$ such that $t\precsim_{M_m(A\rtimes_\alpha G)} s$. Since $A\rtimes_\alpha G$ is simple unital and infinite-dimensional, by \cite[Lemma 2.4]{1408.5546} there exist pairwise orthogonal positive elements $t_1, t_2, \cdots, t_m\in \overline{t(A\rtimes_\alpha G)t}$ such that
	\[
	t_1\sim t_2\sim\cdots \sim t_m.
	\]
	
	Write $f^{(k)}=\sum_{i,j=1}^mf^{(k)}_{i,j}\otimes e_{i,j}$, where $\{e_{i,j}\}_{i,j=1}^m$ is a system of matrix units for $M_m$, $f^{(k)}_{i,j}\in A\rtimes_\alpha G$, $1\leq k\leq l$, $1\leq i,j\leq m$. Set $F_1=\{f^{(k)}_{i,j}\colon 1\leq k\leq l, 1\leq i,j\leq m\}$. Applying Lemma \ref{lemap} with $F_1$ in place of $F$, with $\varepsilon/m^2$ in place of $\varepsilon$, and with $t_1$ in place of $s$, we get a C*-subalgebra $B_1$ of $A\rtimes_\alpha G$ with $B_1\cong \overline{aAa}\otimes M_n$ for some $a\in A_+$, and an element $d_1\in (B_1)_+$ such that
	\begin{enumerate}
		\setcounter{enumi}{4}
		\item\label{lem3.55} $\|d_1x-xd_1\|<\varepsilon/m^2$ for all $x\in F_1$.
		\item\label{lem3.56} $d_1x\in_{\varepsilon/m^2} B_1$ for all $x\in F_1$.
		\item\label{lem3.57} $1_{A\rtimes_\alpha G}-d_1\precsim_{A\rtimes_\alpha G} t_1$.
		\item\label{lem3.58} $\|d_1\|=1$.
	\end{enumerate}
	
	Set $B=B_1\otimes M_m\cong \overline{aAa}\otimes M_{nm}$ and $d=d_1\otimes 1_m\in B^1_+$. Then (\ref{lem3.51}) follows from (\ref{lem3.55}), and (\ref{lem3.52}) follows from (\ref{lem3.56}).
	Using (\ref{lem3.57}) and Lemme \ref{lemcuc}(5) at the second step, Lemma \ref{lemcuc}(4) at the third step, and Lemma \ref{lemcuc}(3) at the fourth step, we get
	\[
	\begin{aligned}
	    1_{M_m(A\rtimes_\alpha G)}-d & =(1_{A\rtimes_\alpha G}-d_1)\otimes 1_m\\
	    & \precsim t_1\oplus t_2\oplus \cdots\oplus t_m \sim t_1+t_2+\cdots+t_m \precsim t\precsim s.
	\end{aligned}
	\]
	This proves (\ref{lem3.53}). Since $\|d_1\|=1$, we have $\|d\|=1$. Condition (\ref{lem3.54}) follows from the same argument used in the proof of Theorem \ref{thmap}.
\end{proof}

\section{Preservation of property (TM) and stable rank one}\label{sec4}

In this section, we give the proof of the main theorem. Before that, we give some basic lemmas.

\begin{lemma}\label{lemsfqtc}
	Let $A$ be a stably finite unital C*-algebra, let $G$ be a finite group, and let $\alpha$ be an action of $G$ on $A$. Then $A\rtimes_\alpha G$ is stably finite and $\mathrm{QT}(A\rtimes_\alpha G)\neq\emptyset$.
\end{lemma}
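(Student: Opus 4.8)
The plan is to avoid constructing a quasitrace on $A\rtimes_\alpha G$ by hand, and instead to realize $A\rtimes_\alpha G$ as a unital C*-subalgebra of $M_n(A)$, where $n=\mathrm{card}(G)$, and then to deduce both conclusions from stable finiteness together with Theorem \ref{thmsfqt}.

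First I would record why the naive direct construction is problematic, since this is the main obstacle. One is tempted to take $\tau\in\mathrm{QT}(A)$ (which exists by Theorem \ref{thmsfqt}), average it to a $G$-invariant $2$-quasitrace $\bar{\tau}=\tfrac1n\sum_{g\in G}\tau\circ\alpha_g$, and set $\sigma=\bar{\tau}\circ E$, where $E\colon A\rtimes_\alpha G\to A$ is the canonical faithful conditional expectation. A computation gives $E(x^*x)=\sum_{g\in G}\alpha_{g^{-1}}(a_g^*a_g)$ and $E(xx^*)=\sum_{g\in G}a_ga_g^*$ for $x=\sum_{g\in G}a_gu_g$, so verifying the quasitrace identity $\sigma(x^*x)=\sigma(xx^*)$ would require additivity of $\bar{\tau}$ across these two sums. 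Since a quasitrace is only additive on \emph{commuting} positive elements, and the summands here need not commute, this route does not obviously produce a quasitrace. This is exactly the difficulty that the embedding approach circumvents.

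Instead, I would use the left regular covariant representation to build an injective unital $\ast$-homomorphism $\Phi\colon A\rtimes_\alpha G\to M_n(A)$, with matrix entries indexed by $G$: send $a\in A$ to the diagonal matrix whose $(g,g)$-entry is $\alpha_{g^{-1}}(a)$, and send $u_h$ to the permutation unitary whose $(g,g')$-entry is $\delta_{g,hg'}1_A$. A short computation checks covariance, namely $\Phi(u_h)\Phi(a)\Phi(u_h)^*=\Phi(\alpha_h(a))$ (both sides are diagonal with $(g,g)$-entry $\alpha_{g^{-1}h}(a)$), so $\Phi$ extends to a $\ast$-homomorphism on the crossed product. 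It is unital since $\Phi(1_{A\rtimes_\alpha G})=1_{M_n(A)}$, and it is injective because $G$ is finite, so the full and reduced crossed products coincide and $\Phi$ is (up to the standard identification) the regular representation; injectivity can also be seen directly by reading off each coefficient $a_h$ from the matrix entries of $\Phi(\sum_{h\in G}a_hu_h)$.

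Finally I would conclude. Since $A$ is stably finite, $M_{kn}(A)\cong M_k(M_n(A))$ is finite for every $k$, whence $M_n(A)$ is stably finite. As $A\rtimes_\alpha G$ embeds unitally into $M_n(A)$, and any unital C*-subalgebra of a finite C*-algebra is finite (an isometry in the subalgebra is an isometry in the ambient finite algebra, hence a unitary), it follows that $M_k(A\rtimes_\alpha G)$ is finite for all $k$, i.e. $A\rtimes_\alpha G$ is stably finite. Being also unital, Theorem \ref{thmsfqt} applies to $A\rtimes_\alpha G$ and yields $\mathrm{QT}(A\rtimes_\alpha G)\neq\emptyset$, completing the proof.
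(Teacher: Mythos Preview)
Your proof is correct and follows essentially the same approach as the paper: both embed $A\rtimes_\alpha G$ unitally into $M_n(A)$ via the regular representation (the map you write down coincides with the one the paper cites from \cite[Corollary 4.1.6]{MR2391387}), deduce stable finiteness of the crossed product from that of $M_n(A)$, and then invoke Theorem~\ref{thmsfqt}. Your added discussion of why composing an averaged quasitrace with the conditional expectation does not obviously work is a nice piece of motivation, but the mathematical core is the same as the paper's.
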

\begin{proof}
	Put $\mathrm{card}(G)=n$. Identifying $L(l^2(G))$ with $M_n$ and let $\{e_{g,h}\}_{g,h\in G}$ be a system of matrix units for $M_n$. By \cite[Corollary 4.1.6]{MR2391387}, the map $\varphi\colon A\rtimes_\alpha G\rightarrow A\otimes M_n$ defined by
	\[
	\varphi(\sum_{g\in G}a_gu_g)=\sum_{g,h\in G}\alpha_{h^{-1}}(a_g)\otimes e_{h,g^{-1}h}
	\]
	is a unital injective $\ast$-homomorphism. Thus $A\rtimes_\alpha G$ can be viewed as a unital C*-subalgebra of $M_n(A)$. Since $A$ is stably finite, we get $A\rtimes_\alpha G$ is stably finite. It follows from Theorem \ref{thmsfqt} that $\mathrm{QT}(A\rtimes_\alpha G)\neq\emptyset$.
\end{proof}

\begin{lemma}\label{lemiso}
	Let $A$ be a C*-algebra and let $x\in A$. Put $a=x^*x$ and $b=xx^*$. Then there is an isomorphism $\varphi\colon \overline{aAa}\rightarrow \overline{bAb}$ such that $\varphi(a)=b$ and $\tau(\varphi(z))=\tau(z)$ for any quasitrace $\tau$ on $A$ and any $z\in (\overline{aAa})_+$.
\end{lemma}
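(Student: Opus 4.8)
The plan is to realize $\varphi$ through the polar decomposition of $x$. I would write $x=v|x|=va^{1/2}$ in the bidual $A^{**}$, where $v$ is a partial isometry with $v^*v=p$ the support (open) projection of $a$ and $vv^*=q$ the support projection of $b$. Then $p$ acts as a unit on $\overline{aAa}$ and $q$ as a unit on $\overline{bAb}$ inside $A^{**}$, and $vav^*=va^{1/2}a^{1/2}v^*=xx^*=b$. I would then define $\varphi\colon\overline{aAa}\to A^{**}$ by $\varphi(z)=vzv^*$ and show it takes values in $\overline{bAb}$.

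The first point to check is that $\varphi$ really lands in $A$, the subtlety being that $v\notin A$ in general. Here I would use that the elements $a^{1/2}ca^{1/2}$ with $c\in A$ are dense in $\overline{aAa}$ and compute $\varphi(a^{1/2}ca^{1/2})=va^{1/2}ca^{1/2}v^*=xcx^*\in A$; writing $x=b^{1/2}v$ one gets $q(xcx^*)q=xcx^*$, so $xcx^*\in\overline{bAb}$. Since $\|v\|\le 1$ the map is contractive, hence extends to all of $\overline{aAa}$ with image in $\overline{bAb}$. Equivalently one may present $\varphi$ by the norm-convergent formula $\varphi(z)=\lim_{\varepsilon\to 0}x(a+\varepsilon)^{-1/2}z(a+\varepsilon)^{-1/2}x^*$, which manifestly stays in $A$. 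That $\varphi$ is a $\ast$-homomorphism follows from $v^*vz=zv^*v=z$ for $z\in\overline{aAa}$, giving $\varphi(z_1)\varphi(z_2)=vz_1v^*vz_2v^*=vz_1pz_2v^*=vz_1z_2v^*=\varphi(z_1z_2)$ and $\varphi(z)^*=\varphi(z^*)$. Injectivity is immediate, since $vzv^*=0$ forces $z=pzp=v^*(vzv^*)v=0$, and running the same construction for $x^*$ produces $\psi\colon\overline{bAb}\to\overline{aAa}$, $\psi(w)=v^*wv$, which is a two-sided inverse ($\psi\varphi(z)=pzp=z$ and $\varphi\psi(w)=qwq=w$). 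Thus $\varphi$ is an isomorphism onto $\overline{bAb}$, and $\varphi(a)=vav^*=b$.

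For the quasitrace identity I would avoid any appeal to continuity or boundedness of $\tau$, since quasitraces as defined need be neither, and instead exhibit the right element of $A$. Given $z\in(\overline{aAa})_+$, put $y=vz^{1/2}$. As above $y=\lim_{\varepsilon\to 0}x(a+\varepsilon)^{-1/2}z^{1/2}$ is a norm limit of elements of $A$, so $y\in A$. Using $v^*vz^{1/2}=z^{1/2}$ one computes $y^*y=z^{1/2}v^*vz^{1/2}=z$ and $yy^*=vz^{1/2}z^{1/2}v^*=vzv^*=\varphi(z)$. The quasitrace axiom $\tau(y^*y)=\tau(yy^*)$ then yields $\tau(z)=\tau(\varphi(z))$ exactly, as required.

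The main obstacle I anticipate is technical rather than conceptual: the partial isometry $v$ lives only in $A^{**}$, so the real work is verifying that $vzv^*$ and $vz^{1/2}$ belong to $A$, indeed to $\overline{bAb}$. For this the approximation by $a^{1/2}ca^{1/2}$, together with the uniform bound $\|x(a+\varepsilon)^{-1/2}\|=\|a(a+\varepsilon)^{-1}\|^{1/2}\le 1$, is the key mechanism; once that is in place the algebraic identities for $\varphi$ and the quasitrace computation are routine.
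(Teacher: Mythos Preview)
Your proposal is correct and follows essentially the same approach as the paper: both use the polar decomposition $x=v|x|$ in $A^{**}$, define $\varphi(z)=vzv^*$, and obtain the quasitrace identity by writing $y=vz^{1/2}\in A$ so that $\tau(yy^*)=\tau(y^*y)$ gives $\tau(\varphi(z))=\tau(z)$. The paper handles the technical point that $vy\in A$ for $y\in\overline{aAa}$ by citing \cite[Lemma~3.5.1]{MR1884366}, whereas you supply a direct argument via the dense set $a^{1/2}ca^{1/2}$ and the uniform bound $\|x(a+\varepsilon)^{-1/2}\|\le 1$; otherwise the arguments coincide.
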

\begin{proof}
	Let $x=v|x|$ be the polar decomposition of $x$ in $A^{**}$. Then $vy\in A$ for every $y\in \overline{aAa}$ (see \cite[Lemma 3.5.1]{MR1884366}). Therefore $\varphi(y)=vyv^*$ is an isomorphism from $\overline{aAa}$ to $\overline{bAb}$ which maps $a$ to $b$. Let $\tau$ be a quasitrace on $A$ and let $z\in (\overline{aAa})_+$. Since $v^*v$ is the range projection of $|x|$ in $A^{**}$, we have
	\[
	\tau(\varphi(z))=\tau(vzv^*)=\tau(z^{1/2}v^*vz^{1/2})=\tau(z).
	\]
\end{proof}

\begin{lemma}\label{lemqt}
	Let $A$ be a simple unital C*-algebra with $\mathrm{QT}(A)\neq\emptyset$ and let $a\in \mathrm{Ped}(A\otimes K)_+$. Then $\sup\{\|\tau\vert_{\overline{a(A\otimes K)a}}\| \colon\tau \in \mathrm{QT}(A)\}<\infty$.
\end{lemma}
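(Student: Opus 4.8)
The plan is to reduce the statement to a uniform bound on the dimension functions $d_\tau(a)$ and then to control $d_\tau(a)$ by comparing $a$ with a corner projection. By Lemma \ref{lemqtnd} we have $\|\tau\vert_{\overline{a(A\otimes K)a}}\|=d_\tau(a)$ for every $\tau\in\widetilde{\mathrm{QT}}(A)$, so it suffices to show $\sup\{d_\tau(a)\colon\tau\in\mathrm{QT}(A)\}<\infty$. Writing $p_N=1_A\otimes 1_N\in A\otimes K$, the first routine computation is that $d_\tau(p_N)=\tau(p_N)=N$ for every $\tau\in\mathrm{QT}(A)$: indeed $p_N$ is a projection, the summands $1_A\otimes e_{i,i}$ are mutually orthogonal, each satisfies $\tau(1_A\otimes e_{i,i})=\tau(1_A)$ by the identity $\tau(x^*x)=\tau(xx^*)$ together with the extension in Remark \ref{remqt}(2), and $\tau(1_A)=\|\tau\vert_A\|=1$ since $\tau$ is order-preserving (Remark \ref{remqt}(1)) and $1_A$ dominates $A_+^1$.

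The heart of the argument is to produce a single $N$, depending only on $a$ and not on $\tau$, with $a\precsim p_N$. Granting this, the standard monotonicity of the dimension function under Cuntz subequivalence gives $d_\tau(a)\le d_\tau(p_N)=N$ for every $\tau\in\mathrm{QT}(A)$, so the desired supremum is at most $N$, which is finite.

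To obtain $a\precsim p_N$ I would invoke the defining property of the minimal dense ideal: since $a\in\mathrm{Ped}(A\otimes K)_+$, there is a positive local unit $b\in(A\otimes K)_+$ with $ab=a$ (equivalently $ba=a$, by self-adjointness). From $ab^k=a$ for all $k\ge1$ one deduces $a\,h(b)=h(1)a$ for every continuous $h$ with $h(0)=0$; choosing such an $h=g$ with $0\le g\le1$, $g\equiv0$ on $[0,1/2]$ and $g(1)=1$ yields $g(b)a=a=ag(b)$, hence $a\in\overline{g(b)(A\otimes K)g(b)}$ and so $a\precsim g(b)\precsim(b-1/2)_+$ by Lemma \ref{lemcuc}(3) (using that $g$ vanishes on $[0,1/2]$). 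Finally, because $A$ is unital the projections $p_N$ increase strictly to the unit of the multiplier algebra, so $\|b-p_Nbp_N\|\to0$ and one may fix $N$ with $\|b-p_Nbp_N\|<1/2$; then Lemma \ref{lemcuc}(2) gives $(b-1/2)_+\precsim p_Nbp_N$ and Lemma \ref{lemcuc}(3) gives $p_Nbp_N\precsim p_N$, whence $a\precsim(b-1/2)_+\precsim p_N$.

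I expect the main obstacle to be the clean invocation of the Pedersen-ideal structure, namely the existence of the positive local unit $b$ inside $A\otimes K$ and the passage from $ab=a$ to $a\precsim(b-1/2)_+$, together with making sure the approximation $p_Nbp_N\to b$ is legitimate; this last point is precisely where the unitality of $A$ is used. Once these are in place, the remainder is a routine assembly of the Cuntz-comparison facts of Lemma \ref{lemcuc} with the normalization $\tau(1_A)=1$.
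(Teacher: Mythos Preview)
Your overall strategy---reduce to a uniform bound on $d_\tau(a)$ via a Cuntz comparison $a\precsim p_N$ and then compute $d_\tau(p_N)=N$---is sound, and everything from ``$g(b)a=a$'' onward is correct. The gap is precisely where you anticipated it: the assertion that every $a\in\mathrm{Ped}(A\otimes K)_+$ admits a positive local unit $b$ with $ab=a$ is \emph{not} the defining property of the Pedersen ideal. That property singles out the smaller set $K_{00}(B)_+=\{a\in B_+:\exists\,b\in B_+,\ ab=a\}$, whereas $\mathrm{Ped}(B)_+$ is the hereditary cone it generates, i.e.\ those $a\ge0$ with $a\le\sum_{i=1}^k c_i$ for some $c_i\in K_{00}(B)_+$ (see \cite[5.6]{MR0548006}). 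So as written the proof does not go through.

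The paper avoids this issue by exploiting simplicity directly. Since $A\otimes K$ is simple and $1_A\in\mathrm{Ped}(A\otimes K)_+\setminus\{0\}$, \cite[Proposition II.5.4.3(ii)]{MR2188261} gives $a=\sum_{i=1}^n x_i^*\,1_A\,x_i$, and then
\[
d_\tau(a)\le\sum_i d_\tau(x_i^*1_Ax_i)=\sum_i d_\tau(1_Ax_ix_i^*1_A)\le n\,d_\tau(1_A)=n
\]
uniformly in $\tau\in\mathrm{QT}(A)$. This is both shorter and actually uses the hypothesis that $A$ is simple, which your argument never invokes. Your route can be salvaged without simplicity: from $a\le\sum_{i=1}^k c_i$ with $c_i\in K_{00}$ you get $a\precsim\bigoplus_i c_i$ by Lemma~\ref{lemcuc}(3),(5), and your local-unit/cut-down argument applied to each $c_i$ (which genuinely has a local unit) yields $c_i\precsim p_{N_i}$; hence $a\precsim p_N$ with $N=\sum_i N_i$.
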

\begin{proof}
	Since $A$ is simple, $A\otimes K$ is simple. Then by \cite[Proposition II.5.4.3(ii)]{MR2188261}, there exist $n\in \mathbb{N}$ and $x_1,x_2,\cdots,x_n\in A\otimes K$ such that
	\[
	a=\sum_{i=1}^nx_i^*1_Ax_i.
	\]
	
	For any $\tau\in \mathrm{QT}(A)$, using Lemma \ref{lemqtnd} at the first step, we have
	\begin{align*}
		\|\tau\vert_{\overline{a(A\otimes K)a}}\|=d_\tau(a)\leq \sum_{i=1}^nd_\tau(x_i^*1_Ax_i)=\sum_{i=1}^nd_\tau(1_Ax_ix_i^*1_A)\leq nd_\tau(1_A)=n.
	\end{align*}
\end{proof}

Recall that for simple C*-algebra $A$, we identify every $\tau\in \widetilde{\mathrm{QT}}(\overline{aAa})$ with its unique extension $\tilde{\tau} \in \widetilde{\mathrm{QT}}(A)$.

\begin{lemma}\label{lemqte}
	Let $A$ be a simple unital C*-algebra with $\mathrm{QT}(A)\neq\emptyset$, let $m\in \mathbb{N}$, and let $e\in M_m(A)_+\setminus\{0\}$. Then $\sup\{\|\tau\vert_A\|\colon \tau\in \mathrm{QT}(\overline{eM_m(A)e})\}<\infty$.
\end{lemma}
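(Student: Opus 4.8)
The plan is to reduce the supremum to a bound on a single quantity that is manifestly independent of $\tau$, and then to run the same Cuntz-comparison computation as in Lemma \ref{lemqt}, but with the roles of the unit and of the distinguished positive element interchanged.

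First I would set $A'=M_m(A)$, which is simple and unital, and let $p\in A'_+$ be the image of $1_A$ under the upper-left corner embedding $A\hookrightarrow M_m(A)$, i.e. the unit of the canonical copy of $A$ in $A'$. Since $A'$ is simple and $e\in A'_+\setminus\{0\}$, Remark \ref{remqt}(3) lets me identify each $\tau\in \mathrm{QT}(\overline{eA'e})$ with its unique extension in $\widetilde{\mathrm{QT}}(A')$, which in turn extends to $A'\otimes K$ by Remark \ref{remqt}(2); under this identification $\|\tau\vert_A\|$ is computed on the corner $A\cong pA'p\subset A'\subset A'\otimes K$. Because $\tau$ is order-preserving (Remark \ref{remqt}(1)) and $p$ is the unit of that corner, every $a\in A_+^1$ satisfies $a\le p$, so $\|\tau\vert_A\|=\tau(p)=d_\tau(p)$, the last equality holding since $p$ is a projection. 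Thus it suffices to bound $d_\tau(p)$ uniformly over $\tau\in \mathrm{QT}(\overline{eA'e})$.

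Next I would record the one piece of information carried by the normalization: by Lemma \ref{lemqtnd} applied in $A'$, and using $\overline{e(A'\otimes K)e}=\overline{eA'e}$, we have $d_\tau(e)=\|\tau\vert_{\overline{eA'e}}\|=1$ for every $\tau\in \mathrm{QT}(\overline{eA'e})$. The key step is then to relate $p$ to $e$ via a decomposition whose length does not depend on $\tau$. Since $A'\otimes K$ is simple and $p\in \mathrm{Ped}(A'\otimes K)_+$ (as $A'$ is unital), \cite[Proposition II.5.4.3(ii)]{MR2188261} provides $N\in\mathbb{N}$ and $x_1,\dots,x_N\in A'\otimes K$ with $p=\sum_{i=1}^N x_i^*ex_i$, exactly as in the proof of Lemma \ref{lemqt} but with $e$ in place of $1_A$ and $p$ in place of $a$. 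Using $\sum_i a_i\precsim \bigoplus_i a_i$ together with monotonicity of $d_\tau$, then the identity $d_\tau(x_i^*ex_i)=d_\tau(e^{1/2}x_ix_i^*e^{1/2})$, and finally $e^{1/2}x_ix_i^*e^{1/2}\le \|x_i\|^2e\precsim e$, I would obtain
\[
d_\tau(p)\le \sum_{i=1}^N d_\tau(x_i^*ex_i)=\sum_{i=1}^N d_\tau(e^{1/2}x_ix_i^*e^{1/2})\le \sum_{i=1}^N d_\tau(e)=N.
\]
Since $N$ depends only on the fixed decomposition of $p$, hence only on $A$, $m$, and $e$, and not on $\tau$, this gives $\sup\{\|\tau\vert_A\|\colon \tau\in \mathrm{QT}(\overline{eM_m(A)e})\}\le N<\infty$.

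I do not expect a serious obstacle here, as the argument is a mild variant of Lemma \ref{lemqt}. The only genuine point of care is bookkeeping with the identifications: verifying that $\|\tau\vert_A\|$ really equals $\tau(p)$ once $\tau$ is transported from $\overline{eA'e}$ to $A'\otimes K$, and, above all, noticing that one must decompose the corner projection $p$ in terms of $e$, the reverse direction to Lemma \ref{lemqt}, so that the normalization $d_\tau(e)=1$ can be fed in and the resulting bound $N$ is uniform in $\tau$.
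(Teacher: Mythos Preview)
Your proposal is correct and follows essentially the same argument as the paper: both identify $\|\tau\vert_A\|$ with $d_\tau(1_A)$, decompose $1_A$ as $\sum_{i=1}^N x_i^*ex_i$ using simplicity, and bound $d_\tau(1_A)\le N d_\tau(e)=N$ via $d_\tau(e)=\|\tau\vert_{\overline{eM_m(A)e}}\|=1$. The only cosmetic difference is that you pass to $A'\otimes K$ whereas the paper stays in $M_m(A)$; since $M_m(A)$ is already simple and unital, the decomposition can be found there directly, which streamlines the bookkeeping slightly.
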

\begin{proof}
	Since $M_m(A)$ is simple, there exist $n\in \mathbb{N}$ and $x_1, x_2, \cdots, x_n\in M_m(A)$ such that
	\[
	1_A=\sum_{i=1}^nx_i^*ex_i.
	\]
	
	For any $\tau\in \mathrm{QT}(\overline{eM_m(A)e})$, we have
	\[
	\|\tau\vert_A\|=d_\tau(1_A)\leq nd_\tau(e)=n\|\tau\vert_{\overline{eM_m(A)e}}\|=n.
	\]
\end{proof}

\begin{theorem}\label{thmtm}
	Let $A$ be an infinite-dimensional stably finite simple unital C*-algebra, let $G$ be a finite group, and let $\alpha\colon G\rightarrow \mathrm{Aut}(A)$ be an action of $G$ on $A$ which has the weak tracial Rokhlin property. If $A$ has property (TM), then $A\rtimes_\alpha G$ has property (TM).
\end{theorem}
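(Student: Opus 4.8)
Write $C=A\rtimes_\alpha G$ and $N=\mathrm{card}(G)$. By Lemma \ref{lemsfqtc}, $C$ is a stably finite simple unital C*-algebra with $\mathrm{QT}(C)\neq\emptyset$, so property (TM) is meaningful for $C$. To verify it I fix $a\in\mathrm{Ped}(C\otimes K)^1_+$, $n\in\mathbb{N}$ and $\varepsilon>0$, and must produce a c.p.c. order zero map $\psi\colon M_n\to\overline{a(C\otimes K)a}$ with $\|a-\psi(1_n)a\|_{2,\mathrm{QT}(C)}<\varepsilon$. Since $C$ is unital, $\mathrm{Ped}(C\otimes K)=\bigcup_m M_m(C)$, so after enlarging $m$ I may assume $a\in M_m(C)^1_+$, and I note $\overline{a(C\otimes K)a}=\overline{a(M_m(C)\otimes K)a}$.

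The plan is to feed $a$ into the crossed-product approximation of Lemma \ref{lemapm} and then invoke property (TM) of $A$ on the approximating subalgebra. First I choose the ``small'' element $s$: since $M_m(C)$ is simple, unital and infinite-dimensional, \cite[Lemma 2.4]{1408.5546} provides, for any $k$, pairwise orthogonal Cuntz-equivalent positive elements whose sum is dominated by $1_{M_m(C)}$; taking $s$ to be one of them forces $\sup_{\tau\in\mathrm{QT}(C)}d_\tau(s)\le 1/k=:\delta_0$ (here I use Lemma \ref{lemqtnd} and that each $\tau\in\mathrm{QT}(C)$ extends to $M_m(C)$ with $d_\tau(1_{M_m(C)})=m$). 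Applying Lemma \ref{lemapm} with $F=\{a,a^{1/2}\}$, a small tolerance $\varepsilon'$ and this $s$ yields $a'\in A^1_+$, a subalgebra $B\cong\overline{a'Aa'}\otimes M_{Nm}$ of $M_m(C)$, and $d\in B^1_+$ that almost commutes with $a$ and $a^{1/2}$, satisfies $da\in_{\varepsilon'}B$, and obeys $1_{M_m(C)}-d\precsim_{M_m(C)}s$.

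Next I transport property (TM) to $B$. Since $A$ is simple, $\overline{a'Aa'}$ is a full hereditary subalgebra, so Brown's theorem (\cite[Theorem 2.8]{MR0454645}) gives a quasitrace-preserving isomorphism $A\otimes K\cong\overline{a'Aa'}\otimes K\cong B\otimes K$ (Lemma \ref{lemiso} records the trace invariance of such corner isomorphisms). Because property (TM) is formulated in terms of the stabilization and its quasitraces and is independent of the cutting element, the argument in the remark following Definition \ref{deftm} shows that $B$ inherits property (TM) from $A$. I then set $b_1=a^{1/2}da^{1/2}$, which lies in $\overline{a(M_m(C)\otimes K)a}_+$; the almost-commutation of $d$ with $a^{1/2}$ shows $b_1$ is close to $da\in_{\varepsilon'}B$, so I may pick a positive $b_0\in B$ within a controlled distance of $b_1$ and pass to the cutdown $(b_0-\sigma)_+\in\mathrm{Ped}(B)_+$. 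Property (TM) of $B$ supplies a c.p.c. order zero map $\varphi\colon M_n\to\overline{(b_0-\sigma)_+(B\otimes K)(b_0-\sigma)_+}$ approximating $(b_0-\sigma)_+$ in the $2$-norm attached to $B$. Since $(b_0-\sigma)_+$ is close to $b_1$ and $\overline{b_1(C\otimes K)b_1}\subseteq\overline{a(C\otimes K)a}$, I use Lemma \ref{lemcup} to conjugate $\varphi$ into $\overline{a(C\otimes K)a}$, producing the desired $\psi$.

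It remains to estimate $\|a-\psi(1_n)a\|_{2,\mathrm{QT}(C)}$, and this is where the work concentrates. Writing $a-\psi(1_n)a=(a-b_1)+(b_1-\psi(1_n)b_1)+\psi(1_n)(b_1-a)$ and applying Lemma \ref{lemqtn}, the outer terms are governed by $\|a-b_1\|_{2,\mathrm{QT}(C)}=\|a^{1/2}(1-d)a^{1/2}\|_{2,\mathrm{QT}(C)}$, which I bound via $\tau\bigl(a^{1/2}(1-d)a(1-d)a^{1/2}\bigr)\le\tau(1-d)\le d_\tau(1-d)\le d_\tau(s)\le\delta_0$, using $a\le 1$, the quasitrace identity, and $1-d\precsim s$. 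The middle term is the heart of the matter: property (TM) of $B$ only controls a $2$-norm built from quasitraces of $B$, whereas I need control in $\|\cdot\|_{2,\mathrm{QT}(C)}$. I bridge this by restricting each $\tau\in\mathrm{QT}(C)$ to $B$ and normalizing; the restriction is a bounded $2$-quasitrace with $0<\|\tau\vert_B\|\le m$, so it normalizes into the quasitrace set governing $B$'s property (TM), and Lemmas \ref{lemqt} and \ref{lemqte} keep the resulting comparison constant finite. The main obstacle is precisely this quasitrace comparison---matching the intrinsic quasitraces of the non-unital corner $B$ with the restrictions of $\mathrm{QT}(C)$ while keeping the normalization constants uniform---after which choosing $\delta_0$, $\varepsilon'$, $\sigma$ and the tolerance in $B$'s property (TM) small enough delivers $\|a-\psi(1_n)a\|_{2,\mathrm{QT}(C)}<\varepsilon$.
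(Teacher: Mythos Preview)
Your strategy---apply Lemma~\ref{lemapm}, invoke property (TM) on the resulting matrix algebra over a corner of $A$, transport the order zero map back via Lemma~\ref{lemcup}, and control the error through the Cuntz-smallness of $1-d$ together with the quasitrace bound of Lemma~\ref{lemqte}---is exactly the paper's. Two points need correction or comparison.

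The reduction ``$\mathrm{Ped}(C\otimes K)=\bigcup_m M_m(C)$'' is false: for instance $1_C\otimes q$, with $q$ a rank-one projection onto a vector in $\ell^2$ having infinitely many nonzero coordinates, is a projection in $C\otimes K$ (hence in the Pedersen ideal) but lies in no $M_m(C)$. The paper does not make this reduction. Given $b\in\mathrm{Ped}(C\otimes K)_+^1$ it sets $b_1=b^{1/2}p_mb^{1/2}\in\overline{b(C\otimes K)b}$ for $m$ large enough that $\|b-b_1\|$ is small (the resulting error in $\|\cdot\|_{2,\tau}$ is controlled via the bound $L_1$ of Lemma~\ref{lemqt}), and then uses Lemma~\ref{lemiso} to pass trace-preservingly between $\overline{b_1(C\otimes K)b_1}$ and $\overline{b_2 M_m(C) b_2}$, where $b_2=p_mbp_m\in M_m(C)_+$; only $b_2$ is fed into Lemma~\ref{lemapm}. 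This extra step is what makes the argument work for arbitrary $b$ in the Pedersen ideal, and it also guarantees that the final order zero map genuinely lands in $\overline{b(C\otimes K)b}$.

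On the invocation of property (TM): the paper does not transfer (TM) to the non-unital subalgebra $B$ via Brown's theorem and the $\sigma$-unital formulation in the remark. Instead it pulls the element $(c_2-\varepsilon_1)_+\in B$ back along the isomorphism $B\cong M_{Nm}(\overline{a'Aa'})$ to an element $e\in M_{Nm}(\overline{a'Aa'})\subset A\otimes K$, and applies Definition~\ref{deftm} \emph{directly to the unital algebra $A$} with this $e$. The resulting map lands in $\overline{eM_{Nm}(\overline{a'Aa'})e}\subset B\subset M_m(C)$, so nothing escapes into an abstract stabilization of $B$. For the quasitrace comparison one then just observes that $\tau\in\mathrm{QT}(C)$, pushed through the isomorphisms, restricts to a bounded $2$-quasitrace on $\overline{eM_{Nm}(A)e}$ of norm at most $m$, which by Lemma~\ref{lemqte} is a bounded multiple of an element of $\mathrm{QT}(A)$---precisely the set against which (TM) of $A$ was applied. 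Your route through Brown's theorem and $\overline{\mathrm{QT}(\overline{eBe})}^w$ can presumably be made to work, but applying (TM) to $A$ itself is shorter and makes the ``main obstacle'' you identify into a one-line application of Lemma~\ref{lemqte}.
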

\begin{proof}
	Put $B=A\rtimes_\alpha G$. Then $B$ is simple by Remark \ref{remsim} and $\mathrm{QT}(B)\neq\emptyset$ by Lemma \ref{lemsfqtc}. Let $b\in \mathrm{Ped}(B\otimes K)_+^1$, let $k\in \mathbb{N}$, and let $\varepsilon>0$. We have to show that there exists a c.p.c. order zero $\varphi\colon M_k\rightarrow \overline{b(B\otimes K)b}$ such that
	\[
	\|b-\varphi(1_k)b\|_{2,\mathrm{QT}(B)}\leq\varepsilon.
	\]
	
	Set $L_1=\sup \{\|\tau\vert_{\overline{b(B\otimes K)b}}\|\colon \tau\in \mathrm{QT}(B)\}$. Then $L_1<\infty$ by Lemma \ref{lemqt}. Let $\{e_{i,j}\}$ be the standard matrix units for $K$. For each $n\in \mathbb{N}$, set $p_n=\sum_{i=1}^n1_A\otimes e_{i,i}$. Then $(p_n)_{n=1}^\infty$ is an increasing approximate identity of projections for $A\otimes K$. Choose a large $m\in \mathbb{N}$ such that
	\begin{equation}\label{eq1}
		\|b-b^{1/2}p_mb^{1/2}\|<\frac{\varepsilon}{8\sqrt{L_1}}.
	\end{equation}
	
	Set $b_1=b^{1/2}p_mb^{1/2}$ and $b_2=p_mbp_m$. Then $b_1\in (\overline{b(B\otimes K)b})_+^1$ and $b_2\in M_m(B)_+^1$. Since $b_2\in M_m(B)$, we have $\overline{b_2(B\otimes K)b_2}\cong \overline{b_2M_m(B)b_2}$. Applying Lemma \ref{lemiso} with $x=b^{1/2}p_m$, we obtain an isomorphism $\varphi_1\colon \overline{b_2M_m(B)b_2}\rightarrow \overline{b_1(B\otimes K)b_1}$ such that $\varphi_1(b_2)=b_1$ and
	\begin{equation}\label{eq2}
		\tau_1(\varphi_1(z))=\tau_1(z) \text{ for all } \tau_1 \in \widetilde{\mathrm{QT}}(B) \text{ and all } z\in \overline{b_2M_m(B)b_2}.
	\end{equation}
	
	Choose $\varepsilon_1$ such that
	\[
	0<\varepsilon_1<\frac{\varepsilon}{24\sqrt{m}}.
	\]
	Applying Lemma \ref{lemcup} with $\varepsilon_1$ in place of $\varepsilon$ and with $\sigma=\varepsilon_1$, we get $\delta_1$ with $0<\delta_1<\varepsilon_1$ such that if $x,y\in M_m(B)^1_+$ satisfying $\|x-y\|<\delta_1$, then there exists an injective $\ast$-homomorphism $\psi\colon \overline{(x-\varepsilon_1)_+M_m(B)(x-\varepsilon_1)_+}\rightarrow \overline{yM_m(B)y}$ satisfying
	\begin{equation}\label{eq3}
		\|\psi(z)-z\|<\varepsilon_1\|z\| \text{ for all } z\in \overline{(x-\varepsilon_1)_+M_m(B)(x-\varepsilon_1)_+}.
	\end{equation}
	By Lemma \ref{approx}(2), we can choose $\delta_2$ such that if $x,y$ are self-adjoint elements of $M_m(B)^1$ satisfying $\|x-y\|<\delta_2$, then $\|x_+-y_+\|<\delta_1$. Set $\delta=\min\{\delta_1,\delta_2/4\}$.
	
	Note that $M_m(B)$ is simple unital and infinite-dimensional. By \cite[Corollary 2.5]{1408.5546} there is an element $s\in M_m(B)_+\setminus\{0\}$ such that for all $\tau_2\in \mathrm{QT}(M_m(B))$ we have
	\begin{equation}\label{eq4}
		d_{\tau_2}(s)<\frac{\varepsilon^2}{64m}.
	\end{equation}
	Set $n=\mathrm{card}(G)$. Applying Lemma \ref{lemapm} with $F=\{b_2, b_2^{1/2}\}$, with $\delta$ in place of $\varepsilon$, and with $s, m$ be given above, we obtain an element $a\in A_+^1$, a C*-subalgebra $C$ of $M_m(B)$, an isomorphism $\varphi_3\colon M_{nm}(\overline{aAa})\rightarrow C$, and an element $d\in C_+^1$ such that
	\begin{flalign}\label{eq5}
		& (1)\ \|db_2^{1/2}-b_2^{1/2}d\|<\delta. &\notag\\
		& (2)\ db_2\in_\delta C. &\notag\\
		& (3)\ 1_{M_m(B)}-d\precsim_{M_m(B)} s.
	\end{flalign}
	
	By (1) and (2), there exists an element $c\in C$ such that $\|b_2^{1/2}db_2^{1/2}-c\|<2\delta$. Set
	\[
	c_1=\frac{c+c^*}{2(1+2\delta)},\text{ }c_2=(c_1)_+. 
	\]
	Then $c_1$ is a self-adjoint element of $C^1$ and
	\[
	\|c_1-b_2^{1/2}db_2^{1/2}\|\leq\|c_1-\frac{c+c^*}{2}\|+\|\frac{c+c^*}{2}-b_2^{1/2}db_2^{1/2}\|<4\delta<\delta_2.
	\]
	By the choice of $\delta_2$ and the fact that $b_2^{1/2}db_2^{1/2}$ is a positive element, we have
	\begin{equation}\label{eq6}
		\|c_2-b_2^{1/2}db_2^{1/2}\|<\delta_1.
	\end{equation}
	Then by the choice of $\delta_1$, we get an injective $\ast$-homomorphism $\varphi_2\colon \overline{(c_2-\varepsilon_1)_+C(c_2-\varepsilon_1)_+}\rightarrow \overline{(b_2^{1/2}db_2^{1/2})M_m(B)(b_2^{1/2}db_2^{1/2})}\subset \overline{b_2M_m(B)b_2}$ satisfying
	\begin{equation}\label{eq7}
		\|\varphi_2(z)-z\|<\varepsilon_1\|z\| \text{ for all } z\in \overline{(c_2-\varepsilon_1)_+C(c_2-\varepsilon_1)_+}.
	\end{equation}
	
	Set $e=\varphi_3^{-1}((c_2-\varepsilon_1)_+)\in M_{nm}(\overline{aAa})$. If $e\neq 0$, set $L_2=\sup\{\|\tau\vert_A\|\colon \tau\in \mathrm{QT}(\overline{eM_{nm}(A)e})\}$. Then $L_2<\infty$ by Lemma \ref{lemqte}. Since $A$ has property (TM), applying Definition \ref{deftm} with $e$ in place of $a$, with $k$ in place of $n$, and with $\frac{\varepsilon}{8\sqrt{mL_2}}$ in place of $\varepsilon$, we obtain a c.p.c. order zero map $\varphi_4\colon M_k\rightarrow \overline{e(A\otimes K)e}\cong\overline{eM_{nm}(A)e}=\overline{eM_{nm}(\overline{aAa})e}$ such that
	\begin{equation}\label{eq8}
		\|e-\varphi_4(1_k)e\|_{2, \mathrm{QT}(A)}<\frac{\varepsilon}{8\sqrt{mL_2}}.
	\end{equation}
	If $e=0$, we set $\varphi_4=0$.

	Let $\varphi$ be the composition
	\begin{align*}
		M_k & \xrightarrow{\varphi_4} \overline{eM_{nm}(\overline{aAa})e}\xrightarrow{\varphi_3} \overline{(c_2-\varepsilon_1)_+C(c_2-\varepsilon_1)_+}\\
		& \xrightarrow{\varphi_2}\overline{b_2M_m(B)b_2} \xrightarrow{\varphi_1}\overline{b_1(B\otimes K)b_1} \subset \overline{b(B\otimes K)b}.
	\end{align*}
    Then $\varphi$ is a c.p.c. order zero map from $M_k$ to $\overline{b(B\otimes K)b}$. We estimate $\|b-\varphi(1_k)b\|_{2,\mathrm{QT}(B)}$.
    
	Let $\tau\in \mathrm{QT}(B)$. By Lemma \ref{lemqtn}(2) we have
	\begin{equation}\label{eq9}
		\|b-\varphi(1_k)b\|_{2,\tau}^{2/3}\leq \|b_1-\varphi(1_k)b_1\|_{2, \tau}^{2/3}+\|(b-b_1)-\varphi(1_k)(b-b_1)\|_{2, \tau}^{2/3}.
	\end{equation}
	Note that $b-b_1$ is a positive element of $\overline{b(B\otimes K)b}$. Using the definition of $L_1$ at the third step, and (\ref{eq1}) at the fourth step, we have
	\begin{align}\label{eq10}
	\|(b-b_1)-\varphi(1_k)(b-b_1)\|_{2, \tau}^{2/3} & =[\tau((b-b_1)(1_{\mathcal{M}(B\otimes K)}-\varphi(1_k))^2(b-b_1))]^{1/3}\notag\\
	& \leq [\tau((b-b_1)^2)]^{1/3}\leq (L_1\|b-b_1\|^2)^{1/3}<\frac{1}{4}\varepsilon^{2/3}.
    \end{align}
	Note that $b_1=\varphi_1(b_2)$. Using (\ref{eq2}) at the second step, we have
	\begin{align}\label{eq11}
	\|b_1-\varphi(1_k)b_1\|_{2, \tau}^2 &= \tau\circ\varphi_1((b_2-\varphi_2\circ\varphi_3\circ\varphi_4(1_k)b_2)^*(b_2-\varphi_2\circ\varphi_3\circ\varphi_4(1_k)b_2))\notag\\
	& =\tau((b_2-\varphi_2\circ\varphi_3\circ\varphi_4(1_k)b_2)^*(b_2-\varphi_2\circ\varphi_3\circ\varphi_4(1_k)b_2))\notag\\
	& =\|b_2-\varphi_2\circ\varphi_3\circ\varphi_4(1_k)b_2\|_{2,\tau}^2.
	\end{align}
	
	Note that $b_2=b_2^{1/2}db_2^{1/2}+b_2^{1/2}(1_{M_m(B)}-d)b_2^{1/2}$. Using Lemma \ref{lemqtn}(2) again, we have
	\begin{align}\label{eq12}
		\|b_2-\varphi_2\circ\varphi_3\circ\varphi_4(1_k)b_2\|_{2,\tau}^{2/3}\leq &  \|(1_{M_m(B)}-\varphi_2\circ\varphi_3\circ\varphi_4(1_k))b_2^{1/2}db_2^{1/2}\|_{2,\tau}^{2/3}\notag\\
		&+ \|(1_{M_m(B)}-\varphi_2\circ\varphi_3\circ\varphi_4(1_k))b_2^{1/2}(1_{M_m(B)}-d)b_2^{1/2}\|_{2,\tau}^{2/3}.
	\end{align}
	Note that $\tau/m\in \mathrm{QT}(M_m(B))$. Using Lemma \ref{lemqtn}(3) at the first step, (\ref{eq5}) at the fifth step, and (\ref{eq4}) at the last step, we get
	\begin{align}\label{eq13}
	& \|(1_{M_m(B)}-\varphi_2\circ\varphi_3\circ\varphi_4(1_k))b_2^{1/2}(1_{M_m(B)}-d)b_2^{1/2}\|_{2,\tau}^{2/3}\notag\\
	\leq &  \|(1_{M_m(B)}-\varphi_2\circ\varphi_3\circ\varphi_4(1_k))b_2^{1/2}\|^{2/3}\|1_{M_m(B)}-d\|_{2,\tau}^{2/3}\|b_2^{1/2}\|^{2/3}\notag\\
	\leq & \|1_{M_m(B)}-d\|_{2,\tau}^{2/3}=[\tau((1_{M_m(B)}-d)^2)]^{1/3}\notag\\
	\leq & [d_\tau(1_{M_m(B)}-d)]^{1/3}\leq (d_\tau(s))^{1/3}<\frac{1}{4}\varepsilon^{2/3}.
	\end{align}
	By (\ref{eq6}) and (\ref{eq7}), we have
	\begin{align*}
	\|b_2^{1/2}db_2^{1/2}-\varphi_2((c_2-\varepsilon_1)_+)\|\leq & \|b_2^{1/2}db_2^{1/2}-c_2\|+\|c_2-(c_2-\varepsilon_1)_+\|\\
	& +\|(c_2-\varepsilon_1)_+-\varphi_2((c_2-\varepsilon_1)_+)\|< 3\varepsilon_1.
    \end{align*}
	Thus using Lemma \ref{lemqtn}(2) and Lemma \ref{lemqtn}(3) again, we get
    \begin{align}\label{eq14}
	& \|(1_{M_m(B)}-\varphi_2\circ\varphi_3\circ\varphi_4(1_k))b_2^{1/2}db_2^{1/2}\|_{2,\tau}^{2/3}\notag\\
	\leq &  \|(1_{M_m(B)}-\varphi_2\circ\varphi_3\circ\varphi_4(1_k))\varphi_2((c_2-\varepsilon_1)_+)\|_{2,\tau}^{2/3}+(3\varepsilon_1)^{2/3}\|1_{M_m(B)}-\varphi_2\circ\varphi_3\circ\varphi_4(1_k)\|_{2,\tau}^{2/3}\notag\\
	\leq & \|(1_{M_m(B)}-\varphi_2\circ\varphi_3\circ\varphi_4(1_k))\varphi_2((c_2-\varepsilon_1)_+)\|_{2,\tau}^{2/3}+(3\varepsilon_1)^{2/3}m^{1/3}\notag\\
	< & |(1_{M_m(B)}-\varphi_2\circ\varphi_3\circ\varphi_4(1_k))\varphi_2((c_2-\varepsilon_1)_+)\|_{2,\tau}^{2/3}+ \frac{1}{4}\varepsilon^{2/3}.
    \end{align}
    
    Set $\mu=\tau\circ\varphi_2\circ\varphi_3\vert_{\overline{eM_{nm}(A)e}}$. Then $\mu$ is a bounded 2-quasitrace on $\overline{eM_{nm}(A)e}$. It is easily checked that $\|\mu\vert_{\overline{eM_{nm}(A)e}}\|\leq m$. If $e\neq 0$, we view $\mu$ as a bounded 2-quasitrace on $M_{nm}(A)$ by Remark \ref{remqt}(3). Note that $(c_2-\varepsilon_1)_+=\varphi_3(e)$. Then by the definition of $L_2$ and (\ref{eq8}), we get 
    \begin{align}\label{eq15}
    \|(1_{M_m(B)}-\varphi_2\circ\varphi_3\circ\varphi_4(1_k))\varphi_2((c_2-\varepsilon_1)_+)\|_{2,\tau}^{2/3} = \|e-\varphi_4(1_k)e\|_{2,\mu}^{2/3}<\frac{1}{4}\varepsilon^{2/3}.
    \end{align}
    If $e=0$, the above inequality clearly holds.
    
    Combining (\ref{eq9}), (\ref{eq10}), (\ref{eq11}), (\ref{eq12}), (\ref{eq13}), (\ref{eq14}), and (\ref{eq15}), we get
    \begin{equation*}
    	\|b-\varphi(1_k)b\|_{2,\tau}<\varepsilon.
    \end{equation*}
    Since $\tau\in \mathrm{QT}(B)$ is arbitrary, we conclude that
    \begin{equation*}
    	\|b-\varphi(1_k)b\|_{2,\mathrm{QT}(B)}\leq\varepsilon.
    \end{equation*}
\end{proof}

\begin{definition}
	Let $A$ be a simple unital C*-algebra. Recall that $A$ is said to have strict comparison if whenever $a,b\in (A\otimes K)_+$ satisfy $d_\tau(a)<d_\tau(b)$ for all $\tau\in \mathrm{QT}(A)$, then $a\precsim_A b$.
\end{definition}

\begin{remark}
	In some references, the definition of strict comparison is defined for positive elements in $\cup_{n=1}^\infty M_n(A)$ as follows:
	
	A simple unital C*-algebra $A$ is said to have strict comparison if whenever $a,b\in \cup_{n=1}^\infty M_n(A)_+$ satisfy $d_\tau(a)<d_\tau(b)$ for all $\tau\in \mathrm{QT}(A)$, then $a\precsim_A b$.
	
	By \cite[Proposition 6.12]{1408.5546}, this definition is consistent with the previous one.
\end{remark}

Let $\alpha\colon G\rightarrow \mathrm{Aut}(A)$ be an action of a finite group $G$ on a unital C*-algebra $A$. Denote by $A^\alpha$ the fixed point algebra defined by
\[
A^\alpha=\{a\in A\colon \alpha_g(a)=a \text{ for all } g\in G\}.
\]

\begin{corollary}\label{corsr}
	Let $A$ be an infinite-dimensional separable simple unital C*-algebra, let $G$ be a finite group, and let $\alpha\colon G\rightarrow \mathrm{Aut}(A)$ be an action of $G$ on $A$ which has the weak tracial Rokhlin property. If $A$ has stable rank one and strict comparison, then $A\rtimes_\alpha G$ and $A^\alpha$ have stable rank one.
\end{corollary}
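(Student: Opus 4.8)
The plan is to deduce the corollary from Theorem~\ref{thmtm} together with two facts recorded in the introduction: the equivalence, for separable simple C*-algebras with strict comparison and at least one densely defined nontrivial $2$-quasitrace, between stable rank one and property~(TM) (\cite{fu2024tracialoscillationzerostable}), and the preservation of strict comparison under finite group actions with the weak tracial Rokhlin property (\cite[Theorem~4.5]{MR4336489}). The strategy is to pass from stable rank one of $A$ to property~(TM) of $A$, push property~(TM) through the crossed product using Theorem~\ref{thmtm}, and then convert back to stable rank one for $A\rtimes_\alpha G$; the fixed point algebra is then handled as a corner.

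First I would check that $A$ meets the hypotheses of the equivalence. Stable rank one forces $A$ to be stably finite, so $\mathrm{QT}(A)\neq\emptyset$ by Theorem~\ref{thmsfqt}; hence $A$ is separable, simple and unital with strict comparison and a bounded nontrivial $2$-quasitrace. Since $A$ has stable rank one, \cite{fu2024tracialoscillationzerostable} gives that $A$ has property~(TM). As $A$ is in addition infinite-dimensional and $\alpha$ has the weak tracial Rokhlin property, Theorem~\ref{thmtm} applies and shows that $B:=A\rtimes_\alpha G$ has property~(TM).

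Next I would verify that $B$ satisfies the hypotheses of the equivalence in the opposite direction. The algebra $B$ is separable (as $G$ is finite and $A$ is separable), simple by Remark~\ref{remsim}, unital, and stably finite with $\mathrm{QT}(B)\neq\emptyset$ by Lemma~\ref{lemsfqtc}; moreover $B$ has strict comparison by \cite[Theorem~4.5]{MR4336489}. Applying the equivalence of \cite{fu2024tracialoscillationzerostable} to $B$, now in the direction ``property~(TM) $\Rightarrow$ stable rank one'', yields that $A\rtimes_\alpha G$ has stable rank one.

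Finally, for the fixed point algebra I would use its realization as a corner of the crossed product. Setting $e=\frac{1}{\mathrm{card}(G)}\sum_{g\in G}u_g$, one checks that $e$ is a projection in $B$ satisfying $u_ge=eu_g=e$ for every $g\in G$, and that $x\mapsto xe$ is a $\ast$-isomorphism of $A^\alpha$ onto the corner $eBe$. Since $B$ is simple, $e$ is full, so $A^\alpha\cong eBe$ is a full hereditary C*-subalgebra of $B$, and I would conclude by invoking the inheritance of stable rank one under passage to (full) corners. This last inheritance is the step I expect to require the most care: it can be justified either by the Morita invariance of stable rank one for simple C*-algebras, or, staying within the framework of this paper, by checking that strict comparison and property~(TM) descend from $B$ to the corner $eBe$ (using that property~(TM) is independent of the choice of corner, as in the remark following Definition~\ref{deftm}) and then running the \cite{fu2024tracialoscillationzerostable} equivalence once more for $A^\alpha$.
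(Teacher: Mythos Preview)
Your proposal is correct and follows essentially the same route as the paper: pass from stable rank one of $A$ to property~(TM) via \cite{fu2024tracialoscillationzerostable}, push property~(TM) to $A\rtimes_\alpha G$ via Theorem~\ref{thmtm}, verify the crossed product is separable, simple, has $\mathrm{QT}\neq\emptyset$ and strict comparison (using Remark~\ref{remsim}, Lemma~\ref{lemsfqtc}, and \cite[Theorem~4.5]{MR4336489}), and convert back to stable rank one. The only difference is in the treatment of $A^\alpha$: the paper simply invokes \cite[Lemma~4.3(5)]{MR4336489}, whereas you spell out the underlying corner argument; your hedging about this step is unnecessary, since passage of stable rank one to corners $eBe$ is a standard fact (Rieffel) requiring neither simplicity nor a second appeal to property~(TM).
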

\begin{proof}
	Since $A$ has stable rank one, $A$ is stably finite. Then $\mathrm{QT}(A)\neq\emptyset$ by Theorem \ref{thmsfqt}. Since $A$ has stable rank one and strict comparison, it follows from \cite[Theorem 1.1]{fu2024tracialoscillationzerostable} that $A$ has property (TM). Now consider the algebra $A\rtimes_\alpha G$. It is separable since $A$ is separable and $G$ is a finite group. It is simple by Remark \ref{remsim}. It has a non-trivial bounded 2-quasitrace by Lemma \ref{lemsfqtc}. It has strict comparison by \cite[Theorem 4.5]{MR4336489} and property (TM) by Theorem \ref{thmtm}. Therefore, it follows from \cite[Theorem 1.1]{fu2024tracialoscillationzerostable} that $A\rtimes_\alpha G$ has stable rank one. By \cite[Lemma 4.3(5)]{MR4336489}, $A^\alpha$ also has stable rank one.
\end{proof}


\section*{Acknowledgements}
The authors would like to thank Xuanlong Fu for many helpful discussions.




\end{document}